\xapptocmd\normalsize{%
 \abovedisplayskip=12pt plus 3pt minus 9pt
 \abovedisplayshortskip=0pt plus 3pt
 \belowdisplayskip=12pt plus 3pt minus 9pt
 \belowdisplayshortskip=7pt plus 3pt minus 4pt
}{}{}
\theoremstyle{definition}
\newtheorem{definition}{Definition}[section]
\theoremstyle{plain}
\newtheorem{theorem}[definition]{Theorem}
\newtheorem{corollary}[definition]{Corollary}
\newtheorem{lemma}[definition]{Lemma}
\numberwithin{equation}{section}
\begin{document}

\title[Lemniscate Convexity of Generalized Bessel   Functions]{Lemniscate Convexity and Other Properties\\ of Generalized Bessel   Functions}

\author[V. Madaan]{Vibha Madaan}
\address{Department of Mathematics, University of Delhi, Delhi--110 007, India}
\email{vibhamadaan47@gmail.com}

\author[A. Kumar]{Ajay Kumar}
\address{Department of Mathematics, University of Delhi, Delhi--110 007, India}
\email{akumar@maths.du.ac.in}

\author[V. Ravichandran]{V. Ravichandran}
\address{Department of Mathematics, National Institute of Technology, Tiruchirappalli--620015, India}
\email{vravi68@gmail.com, ravic@nitt.edu}
\keywords{Subordination; Lemniscate of Bernoulli; Bessel function; Lommel function}

\subjclass[2010]{30C10; 30C45}

\thanks{The first author is supported by University Grants Commission(UGC), UGC-Ref. No.:1069/(CSIR-UGC NET DEC, 2016).}

	\maketitle
	\begin{abstract}
		Sufficient conditions on associated parameters $p,b$ and $c$ are obtained so that the generalized and \textquotedblleft{normalized}\textquotedblright{} Bessel function $u_p(z)=u_{p,b,c}(z)$ satisfies $|(1+(zu''_p(z)/u'_p(z)))^2-1|<1$ or $|((zu_p(z))'/u_p(z))^2-1|<1$. We also determine the condition on these parameters so that $-(4(p+(b+1)/2)/c)u'_p(z)\prec\sqrt{1+z}$. Relations between the parameters $\mu$ and $p$ are obtained such that the normalized Lommel function of first kind  $h_{\mu,p}(z)$  satisfies the subordination $1+(zh''_{\mu,p}(z)/h'_{\mu,p}(z))\prec\sqrt{1+z}$. Moreover, the properties of Alexander transform of the function $h_{\mu,p}(z) $ are discussed.
	\end{abstract}
	\section{Introduction}
	We consider analytic functions $f$ defined on the unit disk $\mathbb{D}=\{z:|z|<1\},$  and  normalized by $f(0)=0=f'(0)-1$. The class of all these functions is denoted by $\mathcal{A}$ and its subclass  consisting of univalent ($\equiv$ one-to-one) functions is denoted by $\mathcal{S}$. For two analytic functions $f$ and $g$ on $\mathbb{D},\ f$ is said to be \textit{subordinate} to $g,$ written as $f(z)\prec g(z)$ (or $f\prec g$), if there is an analytic function $w:\mathbb{D}\to \mathbb{D}$ with $w(0)=0$ satisfying $f =g\circ w$. If $g$ is a univalent function, then $f(z)\prec g(z)$ if and only if $f(0)=g(0)$ and $f(\mathbb{D})\subset g(\mathbb{D})$. It is worth to mention that this concept of subordination is a natural generalization of inequalities to complex plane.  The class of convex functions (respectively starlike functions) consists of all those functions $f\in\mathcal{A}$ for which $f(\mathbb{D})$ is convex (respectively starlike with respect to origin) and is denoted by $\mathcal{K}$ (respectively ${\mathcal{S^*}}$). An analytic description of class $\mathcal{K}$ and $\mathcal{S^*}$ is as follows:
		\[
		 \mathcal{K}:=\left\{ f\in\mathcal{A} : \operatorname{Re}\left(1+\frac{zf''(z)}{f'(z)}\right)>0,\ z\in\mathbb{D}  \right\}\quad \text{and} \quad
		 \mathcal{S^*}:=\left\{  f\in\mathcal{A}:\operatorname {Re}\frac{zf'(z)}{f(z)}>0,\ z\in\mathbb{D}  \right\}.
		 \]
		 A function $f\in\mathcal{A}$ is \emph{lemniscate convex} if $1+(zf''(z)/f'(z))$ lies in the region bounded by right half of lemniscate of Bernoulli given by $\{ w: |w^2-1|=1  \}$. In terms of subordination, the function $f$ is called \emph{lemniscate convex} if $1+(zf''(z)/f'(z))\prec\sqrt{1+z}$ and similarly, the function $f$ is \emph{lemniscate starlike} if $zf'(z)/f(z)\prec\sqrt{1+z}$. On the other hand, the function $f\in\mathcal{A}$ is \emph{lemniscate Carath\'{e}odory} if $f'(z)\prec\sqrt{1+z}$. Since $\operatorname{Re}\sqrt{1+z}>0,$ a lemniscate Carath\'{e}odory function is a Carath\'{e}odory function and hence is univalent. Note that a lemniscate convex function $f$ satisfies \[ \left|\operatorname{arg}\left(1+\frac{zf''(z)}{f'(z)}\right)\right|<\frac{\pi}{4}  \] and hence it is strongly convex of order $1/2$. 
	
		
		The function $w(z):=w_{p,b,c}(z)$ is a particular solution of the second order linear differential equation
		\begin{equation}\label{bessel eqn}
		z^2w''(z)+bzw'(z)+(cz^2-p^2+p(1-b))w(z)=0, \quad{} \quad{} p,b,c\in\mathbb{C}
		\end{equation}
		and is given by
		\begin{equation}\label{gen bessel func}
		w_{p,b,c}(z)=\underset{n\geq0}{\sum}\frac{(-c)^n}{n!\,\Gamma(p+n+(b+1)/2)}\left(\frac{z}{2}\right)^{2n+p},\quad{} \quad{} z\in\mathbb{C}.
		\end{equation}
		The function $w_{p,b,c}(z)$ is called the generalized Bessel function of first kind of order $p$. For some particular values of $b$ and $c,$ the equation \eqref{bessel eqn} reduces to Bessel $(b=1,\,c=1),$ modified Bessel $(b=1,\,c=-1),$ spherical Bessel $(b=2,\,c=1),$ modified spherical Bessel $(b=2,\,c=-1)$ differential equations.
		
		To study the geometric properties such as univalence, starlikeness and convexity of Bessel function, modified Bessel function, spherical Bessel function and modified spherical Bessel function of first kind of order $p,$  we consider the normalization of $w_{p,b,c}(z)$ which is defined by the transfomation $u_{p,b,c}(z)=2^p\,\Gamma(p+(b+1)/2)z^{-p/2}w_{p,b,c}(\sqrt{z})$. Let $(\lambda)_n$ denote the Pochhammer (or Appell) symbol defined in terms of the Euler gamma function by
		\[ (\lambda)_n=\frac{\Gamma(\lambda+n)}{\Gamma(\lambda)}=\lambda(\lambda+1)\ldots(\lambda+n-1)  \]
		and $(\lambda)_0=1$. Using the Pochhammer symbol, the expression $u_{p,b,c}(z)$ becomes
		\begin{equation}\label{norm gen bessel func}
		u_p(z)=u_{p,b,c}(z)=\underset{n\geq 0}{\sum}\frac{(-c/4)^n}{(\kappa)_n}\frac{z^n}{n!},\quad{} \quad{} z\in\mathbb{C}
		\end{equation}
		where $p,b,c\in\mathbb{C}$ and $\kappa=p+(b+1)/2\neq 0,-1,-2,\ldots$ The function $u_{p,b,c}(z)$ is called generalized and \textquotedblleft {normalized}\textquotedblright{} Bessel function of first kind of order $p$. Let $b_n=(-c/4)^n/((\kappa)_n n!)$. Note that the function $u_p(z)$ is not normalized according to the usual definition of normalization but $(u_p-b_0)/b_1$ is. Therefore, the word \emph{normalized} has been put in quotes. Also, the series given in \eqref{norm gen bessel func} is convergent in the whole complex plane and hence the function $u_p(z)$ is an entire function. Note that the function $u_p(z)$ satisfies the differential equation \[4z^2u''_p(z)+4\kappa zu'_p(z)+czu_p(z)=0,\quad{} \quad{} z\in\mathbb{C}.  \]
		For a detailed study about the Bessel functions, one may refer \cite{MR2656410,MR2594845,MR3182021,MR3004986,MR3021327,MR3302161}. Let $\mathcal{S}(\alpha,\beta,\lambda)$ be a subclass of $\mathcal{A}$ satisfying $z/f(z)\neq0$ and
		\[ \left|f'(z)\left(\frac{z}{f(z)}\right)^2-\beta z^3\left(\frac{z}{f(z)}\right)'''-(\alpha+\beta)z^2\left(\frac{z}{f(z)}\right)''-1\right|\leq \lambda,\quad{} \quad{} z\in\mathbb{D}.  \] Baricz \emph{et al.} \cite{MR3352679} obtained sufficient conditions on the constants $\alpha>-1$ and $\beta$ such that the function $z/u_{p,b,c}(z)\in\mathcal{S}(\alpha,\beta,\lambda)$. Prajapat \cite{MR2826152} determined conditions for generalized bessel function (with a different normalization that one considered in this paper) to be univalent in the open unit disk. Kanas \emph{et al.} \cite{MR3716218} used the method of differential subordination to obtain sufficient conditions which imply that the function $u_{p,b,c}(z)$ is Janowski convex and $zu'_{p,b,c}(z)$ is Janowski starlike. The method of differential subordination was formulated by Miller and Mocanu \cite{MR1760285}. Radhika \emph{et al.} \cite{MR3753028} established sufficient conditions for Bessel function to be in class of Janowski starlike and Janowski convex functions. In \cite{MR2743533}, Baricz determined the conditions which imply that the function $u_{p,b,c}(z)$ is convex and $zu_{p,b,c}(z)$ is starlike of order $1/2$ in $\mathbb{D}$. Bohra \emph{et al.} \cite{MR3738359} obtained the conditions so that the functions $u_{p,b,c}(z)$ and $zu_{p,b,c}(z)$ are strongly convex of order $1/2$ and strongly starlike of order $1/2$ respectively in $\mathbb{D}.$
		
\medskip
		The Lommel function of first kind $s_{\mu,p}$ is a particular solution of inhomogeneous Bessel differential equation \[ z^2w''(z)+zw'(z)+(z^2-p^2)w(z)=z^{\mu+1}.  \] The function $s_{\mu,p}(z)$ can be expressed in terms of hypergeometric function
		\begin{equation}\label{lommel func}
		s_{\mu,p}(z)=\frac{z^{\mu+1}}{(\mu-p+1)(\mu+p+1)} {}_1F_2\left(1;\frac{\mu-p+3}{2},\frac{\mu+p+3}{2};\frac{-z^2}{4}\right),
		\end{equation}
		where $\mu\pm p$ is not a negative odd integer. Note that the Lommel function $s_{\mu,p}$ does not belong to the class $\mathcal{A}$. Hence, we consider the following normalization of the Lommel function of the first kind:
		\begin{equation}
		h_{\mu,p}(z)=(\mu-p+1)(\mu+p+1)z^{\frac{1-\mu}{2}}s_{\mu,p}(\sqrt{z}) \label{norm lommel func}=z+\underset{n=1}{\overset{\infty}{\sum}}\frac{(-1/4)^n}{(K)_n(F)_n}z^{n+1},\notag
		\end{equation}
		where $K=(\mu-p+3)/2$ and $F=(\mu+p+3)/2$. Clearly the function $h_{\mu,p}\in\mathcal{A}$ and satisfies the differential equation
		\begin{equation}\label{de for h}
		z^2 h''_{\mu,p}(z)+\mu zh'_{\mu,p}(z)+\left(\frac{(\mu+1)^2-p^2}{4}+\frac{z}{4}\right)h_{\mu,p}(z)=(\mu+1-p)(\mu+1+p)\frac{z}{4}.
		\end{equation}
		Ya\u{g}mur \cite{MR3353311} obtained conditions on the parameters  $\mu$ and $p$ such that the function satisfies $\operatorname{Re}(h_{\mu,p}(z)/z)>\alpha$ for $0\leq\alpha<1$. Baricz \emph{et al.} \cite{MR3596935} studied the zeroes of some normalization of Lommel and Struve function and hence determined the radius of convexity of these functions.
		
\medskip
In this paper, sufficient conditions on parameters $p,b$ and $c$ (and $\mu,p$) are derived so that the generalized and \textquotedblleft {normalized}\textquotedblright{} Bessel function $u_{p,b,c}(z)$ (and the normalization $h_{\mu,p}(z)$ of Lommel function $s_{\mu,p}(z)$) is lemniscate convex in $\mathbb{D}$. As an application of lemniscate convexity of generalized and \textquotedblleft {normalized}\textquotedblright{} Bessel function, a relation between the parameters $p,b$ and $c$ is obtained such that the function $zu_{p,b,c}(z)$ becomes lemniscate starlike in $\mathbb{D}$. Moreover, sufficient conditions are obtained on $p,b$ and $c$ for which the function $(-4\kappa/c)u_{p,b,c}(z)$ is lemniscate Carath\'{e}odory in $\mathbb{D},$ hence becoming close-to-convex and therefore univalent. Also, relations between the constants $\mu$ and $p$ are obtained that implies the Alexander transform of the function $h_{\mu,p}(z)$ is lemniscate convex and lemniscate Carath\'{e}odory in $\mathbb{D}$. The method of admissibility conditions for differential subordination formulated by Miller and Mocanu \cite{MR1760285} has been used to prove the stated results.
		
		\section{Main Results}
	
The following theorem describes the conditions on $\kappa$ and $c$ such that $(-4\kappa/c)u'_p(z)\prec\sqrt{1+z}.$
		\begin{theorem}\label{thm univalence of u}
			Let $\kappa,c\in\mathbb{C}$ be such that $c\neq0$ and satisfy
			\begin{equation}\label{eq univalence of u}
			\operatorname{Re}\kappa>\max\{0,|c|-3/4\},
			\end{equation} then $(-4\kappa/c)u'_p(z)\prec\sqrt{1+z}.$
		\end{theorem}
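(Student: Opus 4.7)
The plan is to apply the Miller--Mocanu admissibility framework for second-order differential subordinations with $q(z)=\sqrt{1+z}$. Setting $v(z):=(-4\kappa/c)\,u_p'(z)$, term-by-term differentiation of the series \eqref{norm gen bessel func} gives $v(z)=\sum_{n\geq 0}(-c/4)^n z^n/((\kappa+1)_n\,n!)$, which is entire with $v(0)=1$, so the conclusion becomes the subordination $v\prec q$.

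The next step is to extract a differential equation satisfied by $v$. Substituting $u_p'=-(c/(4\kappa))\,v$ into the Bessel equation $4z^2 u_p''+4\kappa z u_p'+cz u_p=0$ recovers $u_p=v+(z/\kappa)\,v'$, and differentiating this identity yields
\[ z^2 v''(z)+(\kappa+1)\,z v'(z)+(cz/4)\,v(z)=0. \]
Defining $\psi(r,s,t;z):=t+(\kappa+1)s+(cz/4)r$ and taking $\Omega=\{0\}$, the identity $\psi(v(z),zv'(z),z^2 v''(z);z)\equiv 0\in\Omega$ is automatic, so the Miller--Mocanu theorem will deliver $v\prec q$ as soon as $\psi$ is shown to be admissible with respect to $(\Omega,q)$.

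Verifying admissibility is the heart of the argument. A direct computation yields the clean identity
\[ \operatorname{Re}\!\left(1+\frac{\zeta q''(\zeta)}{q'(\zeta)}\right)=\operatorname{Re}\!\left(\frac{2+\zeta}{2(1+\zeta)}\right)=\frac{3}{4} \]
for every $\zeta=e^{i\theta}\in\partial\mathbb{D}\setminus\{-1\}$---a constant, independent of $\theta$, which is the key simplification. Suppose, for contradiction, that $\psi(r,s,t;z)=0$ with $r=q(\zeta)$, $s=m\zeta q'(\zeta)$, $z\in\mathbb{D}$, $m\geq 1$. Using $r^2-1=\zeta$ one obtains $r/s=(2/m)(1+e^{-i\theta})$, hence
\[ \frac{t}{s}+1=-\kappa-\frac{cz}{2m}\,(1+e^{-i\theta}). \]
Since $|z|<1$ and $|1+e^{-i\theta}|\leq 2$, taking real parts gives $\operatorname{Re}(t/s+1)\leq -\operatorname{Re}\kappa+|c|/m$. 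The hypothesis $\operatorname{Re}\kappa>\max\{0,|c|-3/4\}$ forces this to be strictly less than $3m/4$ for every $m\geq 1$: the map $m\mapsto |c|/m-3m/4$ is decreasing, so its maximum over $m\geq 1$ is attained at $m=1$ with value $|c|-3/4$. This contradicts the admissibility inequality $\operatorname{Re}(t/s+1)\geq 3m/4$, establishing $\psi\in\Psi_1[\{0\},q]$.

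The main obstacle is the admissibility computation itself---in particular, producing the identity $r/s=(2/m)(1+e^{-i\theta})$ and then packaging the resulting real-part bound so that the constraint takes the form appearing in \eqref{eq univalence of u}. Once the constant value $3/4$ of $\operatorname{Re}(1+\zeta q''/q')$ is noticed, the remainder is a routine triangle-inequality estimate, and the worst case $m=1$ recovers precisely the boundary $\operatorname{Re}\kappa=|c|-3/4$, while the auxiliary $\operatorname{Re}\kappa>0$ in the maximum serves to keep $\kappa$ away from poles of $v$.
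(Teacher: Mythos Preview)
Your proof is correct and follows the same strategy as the paper: set $v=(-4\kappa/c)\,u_p'$, derive the linear equation $z^{2}v''+(\kappa+1)zv'+(cz/4)v=0$, take $\psi(r,s,t;z)=t+(\kappa+1)s+(cz/4)r$ with $\Omega=\{0\}$, and verify Miller--Mocanu admissibility for $q(z)=\sqrt{1+z}$. The only difference is packaging---the paper invokes Lemma~\ref{adm lemma} (the explicit parametrization $r=\sqrt{2\cos2\theta}\,e^{i\theta}$, $s=me^{3i\theta}/(2\sqrt{2\cos2\theta})$, $\operatorname{Re}((t+s)e^{-3i\theta})\ge 3m^{2}/(8\sqrt{2\cos2\theta})$) and bounds $|\psi|$ from below, whereas you use the equivalent abstract condition $\operatorname{Re}(t/s+1)\ge 3m/4$ and argue by contradiction after dividing by $s$; your identity $r/s=(2/m)(1+e^{-i\theta})$ is exactly the paper's parametrization in disguise, and in fact your ratio argument never needs the side condition $\operatorname{Re}\kappa>0$ that the paper uses when reducing from $m\ge1$ to $m=1$.
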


The next result gives sufficient conditions on the parameters $\kappa$ and $c$ so that the generalized and \textquotedblleft{normalized}\textquotedblright{} Bessel function $u_p$ is lemniscate convex in $\mathbb{D}.$
\begin{theorem}\label{thm convexity of u}
	If $b,p,c\in\mathbb{C}$ are such that $c\neq0$ and
	\begin{equation} \label{convex}
	\sqrt{3}|\kappa-2|+\frac{|c|}{4}<\sqrt{\frac{9}{8}+\frac{1}{\sqrt{2}}},
	\end{equation}
	then the function $u_p(z)$ is lemniscate convex in $\mathbb{D}.$
\end{theorem}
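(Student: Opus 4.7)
My plan is to apply the Miller--Mocanu admissibility machinery to the quantity $P(z) := 1 + zu_p''(z)/u_p'(z)$, with dominant $q(z) := \sqrt{1+z}$ so that $P \prec q$ is precisely the desired lemniscate convexity. The first step is to convert the second-order linear equation $4z^2u_p''+4\kappa z u_p'+czu_p=0$ into a first-order Riccati equation for $P$. Dividing by $4zu_p'$ gives $u_p/u_p'=4(1-\kappa-P)/c$, and differentiating this identity together with $zu_p''/u_p'=P-1$ yields, after clearing denominators,
\[
zP'(z)+(P(z)-1)(P(z)+\kappa-1)+\frac{cz}{4}=0.
\]
Thus $\psi(P(z),zP'(z);z)=0$, where $\psi(r,s;z):=s+(r-1)(r+\kappa-1)+cz/4$.

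By the admissibility criterion of Miller and Mocanu \cite{MR1760285}, $P\prec q$ will follow once we verify that $\psi(r,s;z)\neq 0$ for every $z\in\mathbb{D}$, every $m\geq 1$, and every $\zeta\in\partial\mathbb{D}\setminus\{-1\}$, with $r=q(\zeta)=\sqrt{1+\zeta}$ and $s=m\zeta q'(\zeta)=m(r^2-1)/(2r)$. Using the rearrangement $(r-1)(r+\kappa-1)=(r^2-1)+(\kappa-2)(r-1)$, I can rewrite
\[
\psi=\frac{(r^2-1)(m+2r)}{2r}+(\kappa-2)(r-1)+\frac{cz}{4}.
\]
Since $|r^2-1|=|\zeta|=1$ and $|z|<1$, two applications of the triangle inequality yield
\[
|\psi|>\frac{|m+2r|}{2|r|}-|\kappa-2|\,|r-1|-\frac{|c|}{4}.
\]

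Two bounds then close the argument. On the right lemniscate one has $\operatorname{Re}r\geq 0$ and $|r|^2\leq 2$, so
\[
|r-1|^2=|r|^2-2\operatorname{Re}r+1\leq|r|^2+1\leq 3,
\]
giving $|r-1|\leq\sqrt{3}$. The central analytical step is the sharp lower bound
\[
\frac{|m+2r|}{2|r|}\geq\sqrt{\frac{9}{8}+\frac{1}{\sqrt{2}}},
\]
attained at $m=1$ and $r=\sqrt{2}$ (i.e.\ $\zeta=1$). To prove it, parametrize $r=\rho e^{i\phi}$ with $\rho^2=2\cos(2\phi)$ and $\phi\in(-\pi/4,\pi/4)$; since $|m+2r|^2=m^2+4m\rho\cos\phi+4\rho^2$, the ratio is monotonically increasing in $m$, so the minimum occurs at $m=1$; substituting and changing to the variable $u=\cos\phi\in(1/\sqrt{2},1]$, the resulting one-variable function is monotonically decreasing in $u$, hence minimized at $u=1$, which gives $r=\sqrt{2}$ and the claimed value. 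Plugging both estimates into the inequality for $|\psi|$ and invoking the hypothesis \eqref{convex} gives $|\psi|>0$, which completes the admissibility verification.

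The main obstacle is this sharp minimization of $|m+2r|/(2|r|)$ on the lemniscate: the correct parametrization via $\rho^2=2\cos(2\phi)$ and the monotonicity calculations in both $m$ and the angular variable are needed to pin down $(m,r)=(1,\sqrt{2})$ as the unique minimizer and read off the exact value $\sqrt{9/8+1/\sqrt{2}}$. The bound $|r-1|\leq\sqrt{3}$ is deliberately lossy (the true maximum of $|r-1|$ on the right lemniscate is $1$), but it produces the clean coefficient $\sqrt{3}$ appearing in the hypothesis; replacing it by a sharper estimate would alter the stated form of the theorem without changing the logical skeleton of the proof.
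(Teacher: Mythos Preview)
Your argument is essentially identical to the paper's: the same Riccati equation for $P=1+zu_p''/u_p'$, the same rewriting $\psi=s+(r^2-1)+(\kappa-2)(r-1)+cz/4$ (the paper carries an extra factor~$4$), and the same two estimates $|s+r^2-1|\geq\sqrt{9/8+1/\sqrt{2}}$ (minimized at $m=1$, $\theta=0$, i.e.\ $r=\sqrt{2}$) and $|r-1|\leq\sqrt{3}$. The one point you omit is the preliminary check that $u_p'$ does not vanish on $\mathbb{D}$ so that $P$ is analytic; the paper handles this by observing that \eqref{convex} forces $\operatorname{Re}\kappa>|c|/4$ and then invoking Lemma~\ref{non zero}.
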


Baricz proved the recursive relation satisfied by $u_p(z)$ as given in
		\begin{lemma}\cite[Lemma 1.2, p.\,14]{MR2656410}\label{relation}
			If $b,p,c\in\mathbb{C}$ and $\kappa\neq0,-1,-2,\ldots,$ then the function $u_p(z)$ satisfies the relation $4\kappa u'_p(z)=-cu_{p+1}(z)$ for all $z\in\mathbb{C}.$
		\end{lemma}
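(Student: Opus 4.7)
The plan is a direct term-by-term manipulation of the power series defining $u_p(z)$ in \eqref{norm gen bessel func}, exploiting the fact that raising the order $p$ by $1$ amounts to raising the parameter $\kappa=p+(b+1)/2$ by $1$.

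First I would differentiate the series \eqref{norm gen bessel func} term-by-term, which is legitimate since $u_p$ is entire, to obtain
\[
u_p'(z)=\sum_{n\geq 1}\frac{(-c/4)^n}{(\kappa)_n}\frac{z^{n-1}}{(n-1)!}.
\]
Then I would reindex by setting $m=n-1$, which shifts the $z^n$ into $z^m$ while turning $(\kappa)_n$ into $(\kappa)_{m+1}$ and pulling out one factor of $-c/4$.

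The key identity is the Pochhammer shift
\[
(\kappa)_{m+1}=\kappa(\kappa+1)_m,
\]
which is immediate from the product definition given in the introduction. Substituting this into the reindexed series cancels the leading $\kappa$ against the factor $1/\kappa$ brought out, and leaves the series $\sum_{m\geq 0}(-c/4)^m z^m/((\kappa+1)_m\,m!)$, which is precisely $u_{p+1}(z)$ since replacing $p$ by $p+1$ replaces $\kappa$ by $\kappa+1$ in the defining series. Assembling the constants yields $u_p'(z)=-(c/(4\kappa))\,u_{p+1}(z)$, i.e.\ $4\kappa u_p'(z)=-c\,u_{p+1}(z)$, as required.

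There is essentially no obstacle: the entire argument is a one-line bookkeeping check on coefficients, and the assumption $\kappa\neq 0,-1,-2,\ldots$ is exactly what is needed to ensure that both $(\kappa)_n$ and $(\kappa+1)_n$ never vanish, so that $u_p$ and $u_{p+1}$ are both well-defined and the division by $\kappa$ is legal.
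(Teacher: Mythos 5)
Your verification is correct: the term-by-term differentiation, the reindexing $m=n-1$, and the Pochhammer shift $(\kappa)_{m+1}=\kappa(\kappa+1)_m$ together give $4\kappa u_p'(z)=-c\,u_{p+1}(z)$, and you rightly observe that $\kappa\neq 0,-1,-2,\ldots$ is exactly what makes every denominator nonzero and the division by $\kappa$ legal. The paper itself states this lemma without proof, citing Baricz's lecture notes, and your argument is precisely the standard coefficient computation given there, so there is nothing to add.
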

If $\sqrt{3}|\kappa-3|+|c|/4<\sqrt{\frac{9}{8}+\frac{1}{\sqrt{2}}},$ then from Theroem \ref{thm convexity of u}, it follows that $1+(zu''_{p-1}(z)/u'_{p-1}(z))\prec\sqrt{1+z}$ and hence $(zu'_{p-1}(z))'/u'_{p-1}(z)\prec\sqrt{1+z}$ which means that $zu'_{p-1}(z)$ is lemniscate starlike in $\mathbb{D}$. Also Lemma \ref{relation} gives that $czu_p(z)=-4(\kappa-1)zu'_{p-1}(z)$. Therefore $zu_p(z)$ is lemniscate starlike in $\mathbb{D}$. Thus we have the following:
\begin{corollary}\label{lem starlikeness}
	If $b,p,c\in\mathbb{C}$ are such that
	\begin{equation}\label{starlikeness}
	\sqrt{3}|\kappa-3|+\frac{|c|}{4}<\sqrt{\frac{9}{8}+\frac{1}{\sqrt{2}}},
	\end{equation}
	then 
	the function $zu_p(z)$ is lemniscate starlike in $\mathbb{D}.$
\end{corollary}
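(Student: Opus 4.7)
The plan is to deduce the corollary from Theorem \ref{thm convexity of u} applied to $u_{p-1}$, exploiting the recursion in Lemma \ref{relation}. First I would observe that when the parameter $p$ in the definition $\kappa=p+(b+1)/2$ is replaced by $p-1$, the associated value of $\kappa$ shifts to $\kappa-1$. Substituting $\kappa-1$ in place of $\kappa$ in the hypothesis \eqref{convex} yields
\[
\sqrt{3}\,|(\kappa-1)-2|+\frac{|c|}{4}=\sqrt{3}\,|\kappa-3|+\frac{|c|}{4}<\sqrt{\frac{9}{8}+\frac{1}{\sqrt{2}}},
\]
which is precisely condition \eqref{starlikeness}. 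Hence Theorem \ref{thm convexity of u} applies to $u_{p-1}$, giving that $u_{p-1}$ is lemniscate convex in $\mathbb{D}$, i.e.,
\[
1+\frac{z u''_{p-1}(z)}{u'_{p-1}(z)}\prec\sqrt{1+z}.
\]

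Next I would rewrite the left-hand side as $(zu'_{p-1}(z))'/u'_{p-1}(z)$, which is exactly the logarithmic derivative of $zu'_{p-1}(z)$. Therefore the subordination above says that $zu'_{p-1}(z)$ is lemniscate starlike in $\mathbb{D}$.

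Then I would invoke Lemma \ref{relation} with $p$ replaced by $p-1$, giving the identity $4(\kappa-1)u'_{p-1}(z)=-c\,u_p(z)$, so that $c\,z u_p(z)=-4(\kappa-1)\,z u'_{p-1}(z)$. Since the two functions $zu_p(z)$ and $zu'_{p-1}(z)$ differ only by a nonzero multiplicative constant (note that $c\neq 0$ and $\kappa\neq 1$ is forced by \eqref{starlikeness}, since otherwise the left-hand side would equal $\sqrt{3}\cdot 2+|c|/4\ge 2\sqrt{3}$, exceeding the right-hand side), they share the same logarithmic derivative. Consequently,
\[
\frac{(zu_p(z))'}{u_p(z)}=\frac{(zu'_{p-1}(z))'}{u'_{p-1}(z)}\prec\sqrt{1+z},
\]
which is the desired lemniscate starlikeness of $zu_p(z)$.

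The argument is essentially a bookkeeping step once Theorem \ref{thm convexity of u} is in hand, so I do not anticipate a genuine obstacle; the only thing that needs checking carefully is the index shift $\kappa\mapsto\kappa-1$ and the verification that $\kappa\neq 1$ under \eqref{starlikeness}, so that dividing by $\kappa-1$ in the recursion is legitimate and the logarithmic-derivative identification is valid.
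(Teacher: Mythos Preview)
Your proposal is correct and follows essentially the same route as the paper: apply Theorem~\ref{thm convexity of u} to $u_{p-1}$ (so that $\kappa$ shifts to $\kappa-1$ and condition~\eqref{convex} becomes~\eqref{starlikeness}), rewrite the resulting subordination as lemniscate starlikeness of $zu'_{p-1}$, and then use the recursion of Lemma~\ref{relation} to transfer this to $zu_p$. Your additional verification that $\kappa\neq 1$ under~\eqref{starlikeness} is a nice bit of care that the paper leaves implicit.
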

For $b=1=c,$ the generalized Bessel function $w_p(z),$ as given in \eqref{gen bessel func}, reduces to the Bessel function of first kind of order $p,$ denoted by $J_p(z)$ is given by
\begin{equation}\label{Jp}
J_p(z)= \underset{n\geq0}{\sum}\frac{(-1)^n}{n!\,\Gamma(p+n+1)}\left(\frac{z}{2}\right)^{2n+p}.
\end{equation}
With $b=1,\,c=-1$ the function $w_p(z)$ reduces to the modified Bessel function of first kind of order $p,$ denoted by $I_p(z)$, is given by
\begin{equation}\label{Ip}
I_p(z)=\underset{n\geq0}{\sum}\frac{1}{n!\,\Gamma(p+n+1)}\left(\frac{z}{2}\right)^{2n+p}.
\end{equation}
With the values $b=1,\,c=1$ using Theorem \ref{thm convexity of u} and Corollary \ref{lem starlikeness}, we get the following
\begin{corollary}\label{jp2}
	Let $p\in\mathbb{C}$. For the function
	\[\mathcal{J}_p(z^{1/2})=2^p\Gamma(p+1)z^{-p/2}J_p(z^{1/2}),  \]
	where $J_p$ is the Bessel function as defined in \eqref{Jp}, the following holds:
	\begin{enumerate}[(i)]
		\item If $|p-1|\sqrt{3}<\sqrt{\frac{9}{8}+\frac{1}{\sqrt{2}}}-\frac{1}{4},$ then $\mathcal{J}_p(z^{1/2})$ is lemniscate convex in $\mathbb{D}$.
		\item If $|p-2|\sqrt{3}<\sqrt{\frac{9}{8}+\frac{1}{\sqrt{2}}}-\frac{1}{4},$ then $\mathcal{J}_p(z^{1/2})$ is lemniscate starlike in $\mathbb{D}$.
	\end{enumerate}
\end{corollary}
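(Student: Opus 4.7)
The plan is to observe that Corollary \ref{jp2} is obtained by directly specializing Theorem \ref{thm convexity of u} and Corollary \ref{lem starlikeness} to the parameter values $b=1$ and $c=1$; no new analytic ingredient is required, and the whole argument reduces to parameter bookkeeping.

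The first step would be to identify $\mathcal{J}_p(z^{1/2})$ with the function $u_{p,1,1}(z)$. Setting $b=c=1$ in the definition $u_{p,b,c}(z)=2^p\Gamma(p+(b+1)/2)z^{-p/2}w_{p,b,c}(\sqrt{z})$, and noting from the series \eqref{Jp} that $w_{p,1,1}=J_p$, one sees that the two expressions coincide. Under this specialization the combined parameter $\kappa = p+(b+1)/2$ simplifies to $\kappa=p+1$, so the hypotheses of the previous results, expressed in $\kappa$ and $|c|$, can be rewritten purely in terms of $p$.

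For part (i), I would substitute $\kappa=p+1$ and $|c|=1$ into the condition \eqref{convex} of Theorem \ref{thm convexity of u}: the inequality $\sqrt{3}|\kappa-2|+|c|/4 < \sqrt{9/8+1/\sqrt{2}}$ becomes $\sqrt{3}|p-1| < \sqrt{9/8+1/\sqrt{2}}-1/4$, which is exactly the hypothesis of Corollary \ref{jp2}(i). Theorem \ref{thm convexity of u} then gives lemniscate convexity of $u_{p,1,1}=\mathcal{J}_p(z^{1/2})$. For part (ii), the same substitution $\kappa=p+1$, $|c|=1$ in condition \eqref{starlikeness} of Corollary \ref{lem starlikeness} yields $\sqrt{3}|p-2| < \sqrt{9/8+1/\sqrt{2}}-1/4$, producing the lemniscate starlikeness of $z\mathcal{J}_p(z^{1/2})$ that is claimed.

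Since the statement is a pure specialization, there is no genuine mathematical obstacle: the only thing that deserves a line of verification is the identification $u_{p,1,1}(z)=\mathcal{J}_p(z^{1/2})$ and the reduction $\kappa=p+1$, after which the inequalities in \eqref{convex} and \eqref{starlikeness} match those in the corollary symbol for symbol.
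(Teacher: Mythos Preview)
Your proposal is correct and is exactly the paper's approach: the corollary is introduced by the single line ``With the values $b=1,\,c=1$ using Theorem~\ref{thm convexity of u} and Corollary~\ref{lem starlikeness}, we get the following,'' with no further argument, and your parameter bookkeeping ($\kappa=p+1$, $|c|=1$) reproduces the stated inequalities verbatim. Your remark that part~(ii) yields lemniscate starlikeness of $z\mathcal{J}_p(z^{1/2})$ is in fact more accurate than the printed statement of the corollary, which omits the factor~$z$; the paper's own subsequent example with $z\mathcal{J}_{3/2}(z^{1/2})$ confirms that $z u_p(z)$ is the intended object.
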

If $b=1,\,c=-1$ in Theorem \ref{thm convexity of u} and Corollary \ref{lem starlikeness}, then the function \[  \mathcal{I}_p(z^{1/2})=2^p\Gamma(p+1)z^{-p/2}I_p(z^{1/2}),  \] where $I_p$ is the modified Bessel function of the first kind of order $p,$ has the properties same as that for the function $\mathcal{J}_p(z^{1/2}),$ because $|c|=1$ in this case.

Since some of the Bessel functions of first kind of order $p$ can also be expressed in terms of trigonometric functions like cos, sin, cosh, and sinh, we have some relations for some trigonometric ratios to be lemniscate convex or lemniscate starlike in $\mathbb{D}$ which are as follows:

Since
\[
\mathcal{J}_{1/2}(z^{1/2})=\sqrt{\frac{\pi}{2}}z^{-1/4}J_{1/2}(\sqrt{z})=\frac{\sin\sqrt{z}}{\sqrt{z}}
\quad \text{and}\quad \mathcal{I}_{1/2}(z^{1/2})=\sqrt{\frac{\pi}{2}}z^{-1/4}I_{1/2}(\sqrt{z})=\frac{\sinh\sqrt{z}}{\sqrt{z}},
\]
the functions $(\sin\sqrt{z})/\sqrt{z}$ and $(\sinh\sqrt{z})/\sqrt{z}$ are lemniscate convex in $\mathbb{D}.$
Also, since \[ z\mathcal{J}_{3/2}(z^{1/2})=3\left(\frac{\sin\sqrt{z}}{\sqrt{z}}-\cos\sqrt{z}\right), \]
the function $(\sin\sqrt{z}-\sqrt{z}\cos\sqrt{z})/\sqrt{z}$ is lemniscate starlike in $\mathbb{D}.$

\medskip
	Now we obtain conditions on $\mu$ and $p$ such that the function $h_{\mu,p}(z)$ is lemniscate convex in $\mathbb{D}.$
\begin{theorem}\label{thm convexity of h}
	Let $\mu,p\in\mathbb{R}$ be such that $\mu\pm p$ is not an odd negative integer. If
	\begin{equation}\label{eq h convexity}
	\frac{3\mu}{2\sqrt{2}}-\sqrt{3}\left|\frac{(\mu+1)^2-p^2}{4}-2\mu-2\right|>\frac{13\sqrt{3}}{4}-\frac{15}{8\sqrt{2}}+\frac{1}{2},
	\end{equation}
	then the function $h_{\mu,p}(z)$ is lemniscate convex in $\mathbb{D}.$
\end{theorem}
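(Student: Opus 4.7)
The plan is to apply the method of differential subordination of Miller and Mocanu, parallel to the approach used for Theorem \ref{thm convexity of u}. Set
\[
P(z) := 1 + \frac{zh''_{\mu,p}(z)}{h'_{\mu,p}(z)},
\]
which is analytic on $\mathbb{D}$ with $P(0) = 1$, since $h_{\mu,p}(z) = z + O(z^2)$. The goal is to prove $P(z) \prec \sqrt{1+z}$.

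First, I would exploit \eqref{de for h}. Dividing \eqref{de for h} by $zh'_{\mu,p}(z)$ and setting $A := (\mu+1)^2 - p^2$ yields
\[
(P + \mu - 1) + \frac{A+z}{4} \cdot \frac{h_{\mu,p}}{zh'_{\mu,p}} = \frac{A}{4h'_{\mu,p}}.
\]
Next, differentiate \eqref{de for h} once in $z$ and use the identities $zh''_{\mu,p} = (P-1)h'_{\mu,p}$ and $z^2 h'''_{\mu,p} = [zP' + (P-1)(P-2)]h'_{\mu,p}$ to obtain a second relation
\[
zP' + P^2 + (\mu-1)P + \frac{A+z}{4} = \frac{A - h_{\mu,p}}{4h'_{\mu,p}}.
\]
From these two relations the ratios $h_{\mu,p}/h'_{\mu,p}$ and $1/h'_{\mu,p}$ can be eliminated algebraically, producing an identity of the form $\Psi\bigl(P(z), zP'(z); z\bigr) = 0$ in admissible form.

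Now suppose for contradiction that $P \not\prec \sqrt{1+z}$. By the lemniscate version of the Miller--Mocanu admissibility lemma, there exist $z_0 \in \mathbb{D}$, $\zeta_0 \in \partial\mathbb{D}\setminus\{-1\}$ and $m \geq 1$ with $P(z_0) = \sqrt{1+\zeta_0}$ lying on the right half of the Bernoulli lemniscate and $z_0 P'(z_0) = m\zeta_0 /\bigl(2\sqrt{1+\zeta_0}\bigr)$. I would substitute these values into the derived identity and take moduli, using the extremal bounds $|P(z_0)| \leq \sqrt{2}$, $|P(z_0)^2 - 1| = 1$, $|z_0| < 1$, and the $\pi/4$ bound on $|\arg P(z_0)|$. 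The numerical constants appearing in \eqref{eq h convexity} (the factors $\sqrt{3}$, $1/\sqrt{2}$, and the combination $A/4 - 2\mu - 2$) arise precisely as the sharp extrema of these moduli on the lemniscate, so the hypothesis \eqref{eq h convexity} is exactly what is needed to rule out the existence of $(z_0,\zeta_0,m)$, giving the desired contradiction.

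The main obstacle, relative to Theorem \ref{thm convexity of u}, is that \eqref{de for h} is \emph{inhomogeneous}: one must simultaneously eliminate two independent auxiliary quantities ($h_{\mu,p}/h'_{\mu,p}$ and $1/h'_{\mu,p}$) rather than a single logarithmic derivative, producing a more elaborate admissibility function $\Psi$. Consequently the final modulus estimate will require a careful case analysis depending on the argument of $\zeta_0 = P(z_0)^2 - 1$ on the unit circle and on the admissibility parameter $m \geq 1$; the extremal configuration (worst case) should be identified explicitly, and verifying that \eqref{eq h convexity} dominates in that worst case is the heart of the computation.
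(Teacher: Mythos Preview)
Your plan has a genuine gap at the elimination step. You propose to combine \eqref{de for h} with its first derivative to eliminate the two auxiliary quantities $u:=h_{\mu,p}/(zh'_{\mu,p})$ and $v:=1/h'_{\mu,p}$ and arrive at a \emph{second-order} admissibility relation $\Psi(P,zP';z)=0$. But two equations in the two independent unknowns $u,v$ let you \emph{solve} for $u,v$ in terms of $P$ and $zP'$; they do not produce a nontrivial constraint on $P,zP'$ alone. Indeed, subtracting your two displayed relations gives only $\tfrac{A}{4}u = zP' + P^{2} + (\mu-2)P + \tfrac{A+z}{4} - \mu + 1$, which determines $u$ rather than eliminating it. To obtain an honest constraint you must bring in one more relation, and any natural choice (differentiating again, or differentiating the formula for $u$ and using $zu'=1-uP$) introduces $z^{2}P''$. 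So the admissibility function is necessarily third-order, not second-order, and the argument you sketch for the modulus estimate (based on a first-order $\Psi$) does not match the identity you would actually have to analyze.

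The paper handles this by differentiating \eqref{de for h} \emph{twice}. After two differentiations both the inhomogeneous right-hand side $\tfrac{A}{4}z$ and the undifferentiated $h_{\mu,p}$ disappear, leaving a homogeneous linear relation among $h'_{\mu,p},\,h''_{\mu,p},\,h'''_{\mu,p},\,h^{(4)}_{\mu,p}$. Dividing by $h'_{\mu,p}$ and multiplying by $z$ then expresses everything via $P,\,zP',\,z^{2}P''$, and Lemma~\ref{adm lemma} is invoked in its full third-order form with a function $\psi(r,s,t;z)$. The lower bound for $|\psi(r,s,t;z)|$ uses the admissibility inequality $\operatorname{Re}\bigl((t+s)e^{-3i\theta}\bigr)\geq 3m^{2}/(8\sqrt{2\cos 2\theta})$ together with $|r-1|\leq\sqrt{3}$, $\cos\theta\geq 1/\sqrt{2}$, and the observation that \eqref{eq h convexity} forces $\mu>2$; the specific constants $\tfrac{13\sqrt{3}}{4}$, $\tfrac{15}{8\sqrt{2}}$, and the combination $\tfrac{A}{4}-2\mu-2$ arise from this third-order estimate, not from a second-order one.
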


As the function $h_{\mu,p}(z)$ is lemniscate convex in $\mathbb{D}$ for $\mu$ and $p$ satisfying \eqref{eq h convexity}, the function $zh'_{\mu,p}(z)$ is lemniscate starlike in $\mathbb{D}$ for $\mu,p$ as in \eqref{eq h convexity}.
	
	The convolution or Hadamard product of two functions $f$ and $g$ having power series expansion as $f(z)=\sum_{n=0}^{\infty}a_nz^n$ and $g(z)=\sum_{n=0}^{\infty}b_nz^n$ is defined as \[ (f*g)(z)=\underset{n=0}{\overset{\infty}{\sum}}a_nb_nz^n. \]
Shanmugam \cite{MR0994916} proved that the for a function $g\in\mathcal{A}$ and a convex univalent function $h$ satisfying $h(0)=1$ and $\operatorname{Re}h(z)>0,$ the class $K_g(h):=\left\{ f\in\mathcal{A}| (g*f)'(z)\neq0\ ,1+\dfrac{z(g*f)''(z)}{(g*f)'(z)}\prec h(z) \text{ for }z\in\mathbb{D} \right\}$ is closed with respect to convolution with convex functions. In particular, for $g(z)=z/(1-z)$ and $h(z)=\sqrt{1+z},$ the class of lemniscate convex functions is closed with respect to convolution with convex functions.

The Alexander operator $A:\mathcal{A}\to\mathcal{A}$ is defined by \[ A[f](z):=\underset{0}{\overset{z}{\int}}\frac{f(t)}{t}dt=-\log[1-z]*f(z).  \]
The Libera operator $L:\mathcal{A}\to\mathcal{A}$ is defined by \[ L[f](z):=\frac{2}{z}\underset{0}{\overset{z}{\int}}f(t)dt=\frac{-2(z+\log[1-z])}{z}*f(z). \]
Thus, by Theorem \ref{thm convexity of h}, we have the following
\begin{theorem}\label{convolution}
	If $\mu,p$ satisfy \eqref{eq h convexity}, then $(h_{\mu,p}*f)(z)$ is lemniscate convex in $\mathbb{D}$ and thus the functions $A[h_{\mu,p}](z)$ and $L[h_{\mu,p}](z)$ are lemniscate convex in $\mathbb{D}.$
\end{theorem}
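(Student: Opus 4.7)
The statement is essentially a corollary of Theorem~\ref{thm convexity of h} combined with the closure property of the class of lemniscate convex functions under convolution with convex functions, recalled just above from Shanmugam~\cite{MR0994916}. The plan is therefore to first verify the hypothesis (that $h_{\mu,p}$ itself is lemniscate convex) and then to identify the Alexander and Libera transforms as convolutions with explicit convex generators.

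First, I would invoke Theorem~\ref{thm convexity of h}: under hypothesis~\eqref{eq h convexity}, the function $h_{\mu,p}$ lies in the class $K_g(\sqrt{1+z})$ with $g(z)=z/(1-z)$, which is precisely the class of lemniscate convex functions (since convolution with $z/(1-z)$ acts as the identity on $\mathcal{A}$). Shanmugam's closure theorem then gives that for every convex univalent $f\in\mathcal{A}$, the convolution $h_{\mu,p}*f$ remains in $K_g(\sqrt{1+z})$, i.e., is lemniscate convex. This disposes of the first assertion, under the standing hypothesis that $f$ is convex.

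Next, for $A[h_{\mu,p}]$ and $L[h_{\mu,p}]$, the explicit representations stated just before the theorem give
\[ A[h_{\mu,p}](z)=(-\log(1-z))*h_{\mu,p}(z), \qquad L[h_{\mu,p}](z)=\left(\frac{-2(z+\log(1-z))}{z}\right)*h_{\mu,p}(z), \]
so it suffices to check that the two generating functions $g_1(z)=-\log(1-z)$ and $g_2(z)=-2(z+\log(1-z))/z$ are convex univalent in $\mathbb{D}$. For $g_1$, a direct computation gives $1+zg_1''(z)/g_1'(z)=1/(1-z)$, whose real part exceeds $1/2$, so $g_1\in\mathcal{K}$. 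For $g_2$, one appeals to the classical fact that the Libera operator maps convex functions to convex functions (applied to the identity $z\mapsto z$); alternatively, one may compute $1+zg_2''(z)/g_2'(z)$ directly and check that its real part is positive. Applying Shanmugam's closure with these two specific choices of convex $f$ then yields the lemniscate convexity of $A[h_{\mu,p}]$ and $L[h_{\mu,p}]$.

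The main obstacle is essentially none: the hard analytic work lives in Theorem~\ref{thm convexity of h}, and the convolution closure is quoted verbatim from the preceding paragraph. The only remaining verifications are the convexity of $g_1$ and $g_2$, both of which are classical, so the proof reduces to a short assembly of already-established ingredients.
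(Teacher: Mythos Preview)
Your proposal is correct and follows exactly the approach the paper intends: the paper presents Theorem~\ref{convolution} as an immediate consequence of Theorem~\ref{thm convexity of h} together with Shanmugam's convolution closure and the convolution representations of the Alexander and Libera operators, without giving a separate proof. Your additional verification that the generators $-\log(1-z)$ and $-2(z+\log(1-z))/z$ are convex, and your observation that the unquantified $f$ in the statement should be read as a convex function, make your write-up slightly more explicit than the paper itself.
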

Now, consider the Alexander transform of the function $h_{\mu,p}(z)$ named as $f_{\mu,p}:\mathbb{D}\to\mathbb{C}$ by \[ f_{\mu,p}(z):=\underset{0}{\overset{z}{\int}}\frac{h_{\mu,p}(t)}{t}dt. \]
The function $f_{\mu,p}(z)$ is analytic in $\mathbb{D}$. Moreover, $f_{\mu,p}\in\mathcal{A}$. As $h_{\mu,p}(z)$ satisfies the differential equation \eqref{de for h}, $f_{\mu,p}(z)$ satisfies the differential equation
\[ z^2f_{\mu,p}'''(z)+(\mu+2)zf_{\mu,p}''(z)+\left(\frac{(\mu+1)^2-p^2}{4}+\frac{z}{4}\right)f'_{\mu,p}(z)=\frac{(\mu+1)^2-p^2}{4}. \]
Differentiating and dividing by $f'_{\mu,p}(z)$ and multipying by $z,$ we get \[ \frac{z^3f_{\mu,p}^{(4)}(z)}{f_{\mu,p}'(z)}+(\mu+4)\frac{z^2f'''_{\mu,p}(z)}{f'_{\mu,p}(z)}+\left(\frac{(\mu+1)^2-p^2}{4}+\frac{z}{4}+\mu\right)\frac{zf''_{\mu,p}(z)}{f'_{\mu,p}(z)}+\frac{z}{4}=0.  \]

Using Theroem \ref{convolution}, the function $f_{\mu,p}(z)$ is lemniscate convex in $\mathbb{D}$ for $\mu,p$ satisfying \eqref{eq h convexity}. Furthermore, the next theorem admits the conditions so that the function $f_{\mu,p}(z)$ defined above is lemniscate convex in $\mathbb{D}.$
\begin{theorem}\label{thm convexity of f}
	Let $\mu,p\in\mathbb{R}$ such that $\mu\pm p$ is not a negative odd integer. If $\mu,p$ satisfy
	\begin{equation}\label{eq f convexity}
	\frac{3\mu}{2\sqrt{2}}-\sqrt{3}\left|\frac{(\mu+1)^2-p^2}{4}-2\mu-2\right|>\frac{13\sqrt{3}}{4}-\frac{15}{8\sqrt{2}}+\frac{1}{4},
	\end{equation}
	then the function $f_{\mu,p}(z)$ is lemniscate convex in $\mathbb{D}$.  
\end{theorem}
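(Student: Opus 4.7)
The plan is to apply the Miller--Mocanu admissibility method to the fourth-order differential equation satisfied by $f_{\mu,p}$. Set $q(z):=1+zf''_{\mu,p}(z)/f'_{\mu,p}(z)$, analytic near $0$ with $q(0)=1$; the goal is $q(z)\prec\sqrt{1+z}$. Differentiating $zf''_{\mu,p}=(q-1)f'_{\mu,p}$ twice yields
\begin{align*}
\frac{z^2 f'''_{\mu,p}(z)}{f'_{\mu,p}(z)} &= zq'(z)+(q(z)-1)(q(z)-2),\\
\frac{z^3 f^{(4)}_{\mu,p}(z)}{f'_{\mu,p}(z)} &= z^2 q''(z)+zq'(z)(3q(z)-5)+(q(z)-1)(q(z)-2)(q(z)-3),
\end{align*}
so substituting into the displayed fourth-order ODE converts it into a second-order differential subordination $\psi(q,zq',z^2q'';z)=0$, where $\psi(r,s,t;z)$ is a cubic in $r$ with coefficients linear in $\mu$, $p$, and $z$.

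Suppose for contradiction that $q\not\prec\sqrt{1+z}$. By the standard Miller--Mocanu lemma there exist $z_0\in\mathbb{D}$, $\zeta_0=e^{i\theta}\in\partial\mathbb{D}$, and $m\ge 1$ with
\[
q_0 := q(z_0)=\sqrt{1+\zeta_0},\qquad z_0q'(z_0)=\frac{m\zeta_0}{2q_0},\qquad \operatorname{Re}\!\left(1+\frac{z_0q''(z_0)}{q'(z_0)}\right)\ge \frac{3m}{4},
\]
the value $3/4$ being $\operatorname{Re}(1+\zeta h''(\zeta)/h'(\zeta))$ for $h(\zeta)=\sqrt{1+\zeta}$ and $\zeta\in\partial\mathbb{D}$. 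Parametrize $q_0=\sqrt{2\cos(\theta/2)}\,e^{i\theta/4}=u+iv$, so that $q_0^2=1+\zeta_0$ and $(u^2+v^2)^2=2(u^2-v^2)$. This yields the sharp bounds $|q_0|\le\sqrt{2}$, $u\ge 0$, and $|q_0-1|\le\sqrt{3}$ on the closure of the right lobe of the lemniscate.

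The core step is to evaluate $\psi_0:=\psi(q_0,z_0q'(z_0),z_0^2q''(z_0);z_0)$, use $q_0^2=1+\zeta_0$ to eliminate quadratic powers of $q_0$, and show that the hypothesis~\eqref{eq f convexity} forces $\operatorname{Re}\psi_0\neq 0$, contradicting $\psi_0=0$. The real part of $\psi_0$ decomposes into three contributions: (a) $\operatorname{Re} z_0^2q''(z_0)$ together with the $3q_0$-component of $z_0q'(z_0)(3q_0+\mu-1)$, which via the admissibility estimate on $\operatorname{Re}(z_0q''/q')$ and $m\ge 1$ contributes the favorable term $3\mu/(2\sqrt{2})$; (b) the piece $(q_0-1)(D-2\mu-2)$ with $D=((\mu+1)^2-p^2)/4$, bounded in modulus by $\sqrt{3}\,|D-2\mu-2|$ using $|q_0-1|\le\sqrt{3}$; and (c) a residue arising from the remaining cubic part and the inhomogeneous term $z_0q_0/4$, whose real part is controlled by $13\sqrt{3}/4-15/(8\sqrt{2})+1/4$ once the lemniscate identity is applied.

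The main obstacle is the bookkeeping in (c): the cubic polynomial terms, the explicit $\mu$-dependence, and the source $z_0q_0/4$ must be combined using the constraint $(u^2+v^2)^2=2(u^2-v^2)$ jointly rather than bounded term-by-term, otherwise the constants deteriorate. The improvement of the rightmost constant from $1/2$ in Theorem~\ref{thm convexity of h} to $1/4$ here is traceable to the fact that the Alexander transform $f_{\mu,p}(z)=\int_0^z h_{\mu,p}(t)/t\,dt$ halves the effective weight of the inhomogeneous contribution in the resulting fourth-order ODE, which serves as a useful sanity check on the numerical calculation.
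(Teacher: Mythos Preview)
Your overall strategy matches the paper's: set $q(z)=1+zf''_{\mu,p}/f'_{\mu,p}$, turn the fourth-order ODE for $f_{\mu,p}$ into an identity $\psi(q,zq',z^2q'';z)=0$, and use the lemniscate admissibility lemma (Lemma~\ref{adm lemma}) to force $q\prec\sqrt{1+z}$. Your derivative formulas for $z^2f'''/f'$ and $z^3f^{(4)}/f'$ are correct and coincide with the paper's expressions (written there in terms of $s-(r-1)+(r-1)^2$, etc.).

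What is missing is the actual estimate. You outline a decomposition into pieces (a), (b), (c) and quote the final constants, but you do not carry out any of the computations; you even flag (c) as ``the main obstacle'' without resolving it. That is the entire content of the proof. In the paper the simplified $\psi$ is
\[
\psi(r,s,t;z)=t+s+3rs+(\mu-2)s+(\mu+1)(r^2-1)+(r-1)^3+(r-1)\Bigl(\tfrac{(\mu+1)^2-p^2}{4}-2\mu-2+\tfrac{z}{4}\Bigr)+\tfrac{z}{4},
\]
and the lower bound comes from a specific grouping: the block $t+s+3rs+(\mu-2)s+(\mu+1)(r^2-1)$ is isolated, the identities $r^2-1=e^{4i\theta}$ and $rs=\tfrac{m}{2}e^{4i\theta}$ are used, and only then is the real part taken \emph{after rotating by $e^{-3i\theta}$}. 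This rotation is essential because the admissibility condition controls $\operatorname{Re}\bigl((t+s)e^{-3i\theta}\bigr)$, not $\operatorname{Re}(t+s)$; your stated plan to show ``$\operatorname{Re}\psi_0\neq0$'' directly will not access the lower bound $3m^2/(8\sqrt{2\cos2\theta})$ without this step. The remaining terms are then bounded via $|r-1|\le\sqrt3$, $|r-1|^3\le3\sqrt3$, $\cos\theta\ge1/\sqrt2$, and $m\ge1$ (using $\mu>2$, which itself must be checked to follow from \eqref{eq f convexity}), producing exactly
\[
|\psi|\ge \frac{15}{8\sqrt2}+\frac{3\mu}{2\sqrt2}-\sqrt3\left|\frac{(\mu+1)^2-p^2}{4}-2\mu-2\right|-\frac{13\sqrt3}{4}-\frac14.
\]
Your remark about the $1/2\to1/4$ improvement over Theorem~\ref{thm convexity of h} is correct and corresponds precisely to the final $+z/4$ here versus $+z/2$ there; but the intermediate constants $13\sqrt3/4$ and $15/(8\sqrt2)$ require the explicit calculation, not a heuristic. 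As written, the proposal is a correct plan but not yet a proof.
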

The following lemma gives conditions on the constants $\mu,p$ such that the function $f_{\mu,p}(z)$ is lemniscate Carath\'{e}odory.
\begin{theorem}\label{thm univalence of f}
	Let $\mu,p\in\mathbb{C}$ be such that $\mu\pm p$ is not negative odd integer and $\operatorname{Re}\mu>-1$ and satisfy
	\begin{equation}
	\left|\frac{(\mu+1)^2-p^2}{2} \right|\sqrt{3}<\frac{\operatorname{Re}\mu}{2\sqrt{2}}+\frac{3}{8\sqrt{2}},
	\end{equation}
	then the function $f'_{\mu,p}(z)=\dfrac{h_{\mu,p}(z)}{z}\prec\sqrt{1+z}.$
\end{theorem}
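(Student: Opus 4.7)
The plan is to apply Miller--Mocanu's second-order admissibility theorem to $P(z) := h_{\mu,p}(z)/z$, with dominant $q(z) := \sqrt{1+z}$. Since $h_{\mu,p}(z) = z + \sum_{n\geq 1} a_n z^{n+1}$, the function $P$ is analytic on $\mathbb{D}$ with $P(0) = 1 = q(0)$, so the task is to show $P \prec q$.

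First I would substitute $h_{\mu,p} = zP$ (whence $h'_{\mu,p} = P + zP'$ and $h''_{\mu,p} = 2P' + zP''$) into the differential equation \eqref{de for h} and divide through by $z$. Writing $C := (\mu+1)^2 - p^2$, this gives
\[
z^2 P''(z) + (\mu+2)zP'(z) + \Bigl(\mu + \tfrac{C}{4} + \tfrac{z}{4}\Bigr) P(z) = \tfrac{C}{4}.
\]
Setting $\psi(r,s,t;z) := t + (\mu+2)s + (\mu + C/4 + z/4)\,r - C/4$, the ODE reads $\psi(P(z), zP'(z), z^2 P''(z); z) \equiv 0$. By the Miller--Mocanu theorem (with $\Omega = \{0\}$), $P \prec q$ follows once it is checked that $\psi \in \Psi_1[\{0\},q]$, namely that $\psi(r, s, t; z) \neq 0$ whenever $z \in \mathbb{D}$, $\zeta = e^{i\theta} \in \partial \mathbb{D} \setminus \{-1\}$, $m \geq 1$, $r = q(\zeta)$, $s = m\zeta q'(\zeta)$, and $\operatorname{Re}(t/s+1) \geq m\operatorname{Re}(\zeta q''(\zeta)/q'(\zeta)+1)$.

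A direct computation gives $r^2 = 1 + e^{i\theta}$ (so $|r| = \sqrt{2\cos(\theta/2)}$ and $\arg r = \theta/4$), $|rs| = m/2$, and $\operatorname{Re}(\zeta q''/q'+1) = 3/4$; hence the $t$-constraint simplifies to $\operatorname{Re}(t/s) \geq 3m/4 - 1$. To verify $\psi \neq 0$, I would argue by contradiction: if $\psi = 0$ then $t = -(\mu+2)s - (\mu+C/4+z/4)r + C/4$, and dividing by $s$ and grouping the $C$-contributions via $(C/4)(r/s) - C/(4s) = (C/(4s))(r-1)$ (together with the boundary identity $(r-1)(r+1) = e^{i\theta}$) reduces the admissibility check to proving
\[
(\operatorname{Re}\mu + 2) + \operatorname{Re}\!\left[\mu\,\tfrac{r}{s}\right] + \operatorname{Re}\!\left[\tfrac{z\,r}{4s}\right] + \operatorname{Re}\!\left[\tfrac{C(r-1)}{4s}\right] > 1 - \tfrac{3m}{4}
\]
uniformly in $\theta \in (-\pi,\pi)$, $m \geq 1$, and $z \in \overline{\mathbb{D}}$.

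The main obstacle is this uniform estimate. Crude modulus bounds on each of $\mu\,(r/s)$, $(z/4)(r/s)$, and $C(r-1)/(4s)$ yield only a weaker sufficient condition, so the argument must track the angular alignment of the contributions--using the explicit formulas $r/s = (4\cos(\theta/2)/m)\,e^{-i\theta/2}$ and $1/s = (2\sqrt{2\cos(\theta/2)}/m)\,e^{-3i\theta/4}$--to identify the extremal configuration $m=1$, $\theta=0$ (at which $|s| = 1/(2\sqrt{2})$ and $|r| = \sqrt{2}$) and thereby recover the precise constants $\sqrt{3}/2$, $1/(2\sqrt{2})$, and $3/(8\sqrt{2})$ that appear in the hypothesis.
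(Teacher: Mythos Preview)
Your overall strategy---apply the Miller--Mocanu admissibility test with dominant $q(z)=\sqrt{1+z}$ to $P=h_{\mu,p}/z$, then rule out $\psi=0$ on the boundary configurations---is exactly the route the paper takes via Lemma~\ref{adm lemma}; your boundary parametrisation becomes the paper's after the substitution $\theta\mapsto 4\theta$. The gap is in the differential equation. Substituting $h_{\mu,p}=zP$ into \eqref{de for h} as printed does yield your identity $z^2P''+(\mu+2)zP'+(\mu+C/4+z/4)P=C/4$, but \eqref{de for h} itself carries a misprint: the coefficient of $h_{\mu,p}$ should be $\tfrac{(\mu-1)^2-p^2}{4}+\tfrac{z}{4}$, not $\tfrac{(\mu+1)^2-p^2}{4}+\tfrac{z}{4}$ (check the $z$-coefficient on both sides, or derive it directly from $h_{\mu,p}(z)=z\,{}_1F_2(1;K,F;-z/4)$). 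With the corrected equation the stray $\mu$ cancels and one obtains the identity the paper actually uses,
\[
z^2P''(z)+(\mu+2)zP'(z)+\Bigl(\tfrac{(\mu+1)^2-p^2}{4}+\tfrac{z}{4}\Bigr)P(z)=\tfrac{(\mu+1)^2-p^2}{4},
\]
so that $\psi(r,s,t;z)=t+(\mu+2)s+(C/4+z/4)r-C/4$ with \emph{no} $\mu r$ term. Your $\psi$ therefore contains a spurious $\mu r$, and the displayed inequality you intend to verify carries an extra $\operatorname{Re}[\mu\,r/s]$ that should not be there.

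A second, smaller issue concerns the constants. The factor $\sqrt{3}$ in the hypothesis does not come from the configuration $m=1$, $\theta=0$; at $\theta=0$ one has $|r-1|=\sqrt{2}-1$. It is simply the crude uniform bound $|r-1|\le\sqrt{3}$ used throughout the paper (cf.\ the proof of Theorem~\ref{thm convexity of u}). The paper does not optimise in $\theta$ term by term: it writes $\psi=(t+s)+(\mu+1)s+(z/4)r+(C/4)(r-1)$, bounds
\[
|\psi|\ \ge\ \operatorname{Re}\bigl((t+s)e^{-3i\theta}\bigr)+\frac{(\operatorname{Re}\mu+1)m}{2\sqrt{2\cos2\theta}}-\frac{\sqrt{2}}{4}-\frac{|C|}{4}\sqrt{3},
\]
and then invokes the inequality in Lemma~\ref{adm lemma} together with $m\ge1$, $\operatorname{Re}\mu>-1$ and $\cos2\theta\le1$. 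Your contradiction approach (divide by $s$ and compare with $\operatorname{Re}(t/s)\ge 3m/4-1$) is formally equivalent, but to recover the stated constants you must carry it out with the corrected $\psi$ and with the uniform bound on $|r-1|$, not by evaluating at a single extremal.
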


\section{Proof of the main results}

The proofs of our theorems are based on the theory of first and second order differential subordination and the following results are needed to prove our results.
		\begin{lemma}\label{adm lemma}\cite{MKR}
			Let $p\in \mathcal{H}[1,n]$ with $p(z)\not\equiv 1$ and $n\geq 1$. Let $\Omega\subset\mathbb{C}$ and $\psi:D\subset\mathbb{C}^3\times\mathbb{D}\to\mathbb{C}$ satisfy
			\[\psi(r,s,t;z)\not\in\Omega\ \text{ whenever } z\in\mathbb{D},  \]
			$r=\sqrt{2\cos 2\theta}e^{i\theta},\ s=me^{3i\theta}/(2\sqrt{2\cos2\theta})$ and $\operatorname{Re}((t+s)e^{-3i\theta})\geq 3m^2/(8\sqrt{2\cos2\theta})$ where $m\geq n\geq1$ and $-\pi/4<\theta<\pi/4$. If $(p(z),zp'(z),z^2p''(z);z)\in D$ for $z\in \mathbb{D}$ and $\psi(p(z),zp'(z),z^2p''(z);z)\in \Omega, \ z\in \mathbb{D},$ then $p(z)\prec\sqrt{1+z}.$
		\end{lemma}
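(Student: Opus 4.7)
The strategy is to convert the subordination $f'_{\mu,p}(z) \prec \sqrt{1+z}$ into a second-order admissibility check via Lemma \ref{adm lemma}. Set $q(z) := f'_{\mu,p}(z) = h_{\mu,p}(z)/z$; since $h_{\mu,p}(z) = z + O(z^2)$, the function $q$ belongs to $\mathcal{H}[1,1]$. Substituting $h_{\mu,p}(z) = z\, q(z)$, $h'_{\mu,p}(z) = q(z) + z q'(z)$, and $h''_{\mu,p}(z) = 2 q'(z) + z q''(z)$ into the differential equation \eqref{de for h} and dividing by $z$ produces the second-order linear ODE
\[ z^2 q''(z) + (\mu + 2)\, z q'(z) + \Bigl(\mu + B + \tfrac{z}{4}\Bigr) q(z) = B, \qquad B := \tfrac{(\mu+1)^2 - p^2}{4}. \]
Defining $\psi(r, s, t; z) := t + (\mu + 2)\, s + (\mu + B + z/4)\, r - B$ gives $\psi\bigl(q(z), z q'(z), z^2 q''(z); z\bigr) \equiv 0$ on $\mathbb{D}$. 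With $\Omega = \{0\}$, the conclusion reduces to verifying $\psi(r, s, t; z) \neq 0$ for every admissibility configuration described in Lemma \ref{adm lemma}.

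I would establish $\psi \neq 0$ by proving $\operatorname{Re}(e^{-3i\theta} \psi) > 0$. Rearranging $(\mu + 2) s = s + (\mu + 1) s$ and $B r - B = B (r - 1)$ yields
\[ \psi = (t + s) + (\mu + 1)\, s + (\mu + z/4)\, r + B(r - 1). \]
Since $s\, e^{-3i\theta} = m/(2 \sqrt{2 \cos 2\theta}) \in \mathbb{R}_{>0}$, combining the admissibility bound $\operatorname{Re}\bigl((t + s)\, e^{-3i\theta}\bigr) \ge 3 m^2 / (8 \sqrt{2 \cos 2\theta})$ with the identity just noted produces a contribution of $\tfrac{3 m^2}{8 \sqrt{2 \cos 2\theta}} + \tfrac{(\operatorname{Re}\mu + 1)\, m}{2 \sqrt{2 \cos 2\theta}}$ to $\operatorname{Re}(e^{-3i\theta}\psi)$. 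The remaining two pieces are controlled via $|r\, e^{-3i\theta}| = \sqrt{2 \cos 2\theta} \le \sqrt{2}$, a direct estimate of $|r - 1|$ on the admissibility curve $r = \sqrt{2\cos 2\theta}\, e^{i\theta}$, and $|z|/4 < 1/4$; specialising to the anticipated critical configuration ($m = 1$, $\theta = 0$) should reduce the strict positivity requirement precisely to the hypothesis $\bigl|\tfrac{(\mu+1)^2 - p^2}{2}\bigr|\sqrt{3} < \tfrac{\operatorname{Re}\mu}{2\sqrt{2}} + \tfrac{3}{8\sqrt{2}}$.

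The chief obstacle is the fine analysis in the second step: \textbf{(a)} extracting $\operatorname{Re}\mu$ rather than $|\mu|$ on the right, which requires grouping the $(\mu + 1)\, s$ piece separately from the $\mu\, r$ piece so that the positive-real factor $s\, e^{-3i\theta}$ is exploited to convert $\mu$ into $\operatorname{Re}\mu$; and \textbf{(b)} producing the coefficient $\sqrt{3}$ in front of $|(\mu+1)^2 - p^2|/2$, which must emerge from the combined worst-case geometric bound on the $(\mu + z/4)\, r$ and $B(r - 1)$ contributions at the extremal admissibility point. Once $\operatorname{Re}(e^{-3i\theta}\, \psi) > 0$ is secured uniformly in $m \ge 1$, $\theta \in (-\pi/4, \pi/4)$, and $z \in \mathbb{D}$, Lemma \ref{adm lemma} delivers $q(z) = h_{\mu,p}(z)/z \prec \sqrt{1 + z}$, which is the desired subordination.
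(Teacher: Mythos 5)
Your proposal does not prove the statement in question. The statement is Lemma~\ref{adm lemma} itself --- the general admissibility criterion asserting that if $\psi(p(z),zp'(z),z^2p''(z);z)\in\Omega$ while $\psi$ avoids $\Omega$ on the admissibility configurations $r=\sqrt{2\cos 2\theta}\,e^{i\theta}$, $s=me^{3i\theta}/(2\sqrt{2\cos 2\theta})$, $\operatorname{Re}((t+s)e^{-3i\theta})\geq 3m^2/(8\sqrt{2\cos 2\theta})$, then $p(z)\prec\sqrt{1+z}$. What you have written instead is a sketch of the proof of Theorem~\ref{thm univalence of f} (the subordination $f'_{\mu,p}(z)=h_{\mu,p}(z)/z\prec\sqrt{1+z}$), and your argument \emph{invokes} Lemma~\ref{adm lemma} as a black box in its final step. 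An argument that assumes the lemma cannot serve as its proof, so there is no overlap at all between your proposal and what is to be shown. (For what it is worth, the paper itself does not prove the lemma either; it cites it from the authors' preprint \cite{MKR}.)

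A genuine proof of Lemma~\ref{adm lemma} must go through the Miller--Mocanu admissibility machinery \cite{MR1760285} applied to the dominant $q(z)=\sqrt{1+z}$: one checks that $q$ is univalent on $\overline{\mathbb{D}}$ with $q(0)=1$, parametrizes the boundary of $q(\mathbb{D})$ by $q(e^{4i\theta})=\sqrt{2\cos 2\theta}\,e^{i\theta}$ for $-\pi/4<\theta<\pi/4$ (which is where the form of $r$ comes from), computes $\zeta q'(\zeta)=e^{3i\theta}/(2\sqrt{2\cos 2\theta})$ at $\zeta=e^{4i\theta}$ to obtain the form of $s=m\zeta q'(\zeta)$, and translates the second-order condition $\operatorname{Re}(t/s+1)\geq m\operatorname{Re}(1+\zeta q''(\zeta)/q'(\zeta))$ into the stated inequality $\operatorname{Re}((t+s)e^{-3i\theta})\geq 3m^2/(8\sqrt{2\cos 2\theta})$; the conclusion $p\prec q$ then follows from the general second-order differential subordination theorem. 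None of these steps appears in your proposal, so the gap is total rather than local: you would need to start over with the correct target statement.
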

		In the case $\psi:\mathbb{C}^2\times\mathbb{D}\to\mathbb{C},$ the condition in Lemma \ref{adm lemma} reduces to $\psi(r,s;z)\not\in\Omega$ whenever $r=\sqrt{2\cos 2\theta}e^{i\theta},\ s=me^{3i\theta}/(2\sqrt{2\cos2\theta})$ for $m\geq n\geq1,\ -\pi/4<\theta<\pi/4$ and $z\in\mathbb{D}.$
		
		\medskip
		Baricz in \cite{MR2656410} determined the following conditions on $\kappa$ and $c$ for which the function $u_p(z)$ is univalent in $\mathbb{D}.$
		\begin{lemma}\cite[Theorem 2.9, p.\,29]{MR2656410}\label{non zero}
			If $b,p,c\in\mathbb{C}$ are such that $\operatorname{Re}\kappa>|c|/4+1,$ then $\operatorname{Re}u_p(z)>0$ for all $z\in\mathbb{D}$. Further, if $\operatorname{Re}\kappa>|c|/4$ and $c\neq0,$ then $u_p$ is univalent in $\mathbb{D}.$		
		\end{lemma}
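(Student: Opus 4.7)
The plan is to prove the two assertions in sequence, reducing the univalence statement to the positive-real-part one via the recurrence in Lemma~\ref{relation}.

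For part~(a), the natural route is to establish the stronger bound $|u_p(z)-1|<1$ on $\mathbb{D}$, from which $\operatorname{Re}u_p>0$ is immediate. Writing $u_p(z)=1+\sum_{n\ge 1}b_nz^n$ with $b_n=(-c/4)^n/((\kappa)_n\,n!)$ and using the standard estimate $|(\kappa)_n|=\prod_{k=0}^{n-1}|\kappa+k|\ge\prod_{k=0}^{n-1}\operatorname{Re}(\kappa+k)=(\operatorname{Re}\kappa)_n$ (valid since $\operatorname{Re}\kappa>|c|/4+1>0$), the triangle inequality reduces the claim to the real-variable estimate
\[
\sum_{n\ge 1}\frac{(|c|/4)^n}{(\operatorname{Re}\kappa)_n\,n!}\le 1\qquad\text{under}\qquad\operatorname{Re}\kappa>|c|/4+1.
\]
The strategy would be to exploit the margin $\operatorname{Re}\kappa-(|c|/4+1)>0$ via the consecutive-term ratio $T_n/T_{n-1}=(|c|/4)/((\operatorname{Re}\kappa+n-1)\,n)$: under the hypothesis this ratio is strictly below $1$ for $n=1$ and drops to at most $(|c|/4)/(2(|c|/4+2))<1/2$ for $n\ge 2$, giving a geometric majorant for the tail that combines with $T_1=(|c|/4)/\operatorname{Re}\kappa$ to yield the required bound.

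For part~(b), set $b_1:=u_p'(0)=-c/(4\kappa)\ne 0$ (using $c\ne 0$) and $g(z):=(u_p(z)-1)/b_1\in\mathcal{A}$; then $u_p$ is univalent in $\mathbb{D}$ if and only if $g$ is. Lemma~\ref{relation} gives $u_p'(z)=b_1\,u_{p+1}(z)$, hence $g'(z)=u_{p+1}(z)$. The hypothesis $\operatorname{Re}\kappa>|c|/4$ rewrites as $\operatorname{Re}(\kappa+1)>|c|/4+1$, which is exactly the hypothesis of part~(a) applied after the parameter shift $p\mapsto p+1$; therefore $\operatorname{Re}u_{p+1}(z)>0$ on $\mathbb{D}$. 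Thus $\operatorname{Re}g'(z)>0$ on the convex disk $\mathbb{D}$, and the Noshiro--Warschawski theorem yields univalence of $g$, and consequently of $u_p$.

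The main obstacle is the quantitative tail estimate in part~(a): the triangle-inequality bound is saturated precisely when $z$ is real negative and the $b_n$ have constant sign (real $\kappa,c>0$ and $z=-r$ with $r\uparrow 1$), so the slim margin $\operatorname{Re}\kappa-|c|/4-1$ has to be absorbed carefully into the geometric comparison. Should this direct approach prove too crude, a fallback is to verify the weaker statement $\operatorname{Re}u_p(e^{i\theta})\ge 0$ on $\partial\mathbb{D}$ via a Fejér--Rogosinski type cosine-sum positivity for the sequence $\{|b_n|\}$, then invoke the minimum principle for the harmonic function $\operatorname{Re}u_p$ on $\overline{\mathbb{D}}$ together with $u_p(0)=1>0$ to promote non-negativity on the boundary to strict positivity on the interior.
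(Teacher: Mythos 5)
The paper itself offers no proof of this lemma --- it is imported verbatim from Baricz's lecture notes \cite{MR2656410} --- so your attempt has to stand entirely on its own, and unfortunately part~(a) of it does not. The coefficient inequality you reduce to, $\sum_{n\ge1}(|c|/4)^n/((\operatorname{Re}\kappa)_n\,n!)\le 1$ under $\operatorname{Re}\kappa>|c|/4+1$, is false once $|c|$ is moderately large: take $c=16$ and $\kappa=5.1$ (so $\operatorname{Re}\kappa>|c|/4+1=5$); then $T_1=4/5.1\approx0.784$, $T_2=16/(2\cdot 5.1\cdot 6.1)\approx0.257$, $T_3\approx0.048$, and the sum already exceeds $1.08$. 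Your own majorant cannot close the gap: the tail-ratio bound $T_n/T_{n-1}<1/2$ for $n\ge2$ only yields $\sum_{n\ge1}T_n\le 2T_1=2(|c|/4)/\operatorname{Re}\kappa$, which tends to $2$, not $1$, as $|c|\to\infty$. Worse, the stronger statement $|u_p(z)-1|<1$ that you are targeting is itself false in this regime, not merely unprovable by the triangle inequality: for real $\kappa,c>0$ and $z=-r$ with $r\uparrow1$, every term of $u_p(z)-1$ is positive and their sum tends to exactly the quantity computed above. So no coefficient-majorant argument can deliver part~(a); only $\operatorname{Re}u_p>0$ is true, not the disc containment. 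The fallback you sketch is not viable either: Fej\'er--Rogosinski positivity criteria require real, monotone or convex coefficient sequences, whereas $b_n=(-c/4)^n/((\kappa)_n n!)$ is complex for general parameters and alternates in sign in the real case, and establishing $\operatorname{Re}u_p\ge0$ on $\partial\mathbb{D}$ is exactly as hard as the original claim.

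The argument that does work runs through the differential equation $4z^2u_p''(z)+4\kappa zu_p'(z)+czu_p(z)=0$ and the Miller--Mocanu admissibility criterion for the half-plane $\operatorname{Re}w>0$ (the same machinery of which Lemma~\ref{adm lemma} is the lemniscate analogue): if $u_p(z_0)=i\rho$ for some $z_0\in\mathbb{D}$, then $\sigma:=z_0u_p'(z_0)\le-(1+\rho^2)/2$ and $\operatorname{Re}(z_0^2u_p''(z_0))+\sigma\le0$, whence
\[
0=\operatorname{Re}\bigl(4z_0^2u_p''+4\kappa z_0u_p'+cz_0u_p\bigr)\le 4(\operatorname{Re}\kappa-1)\sigma+|c||\rho|\le|\rho|\bigl(|c|-4(\operatorname{Re}\kappa-1)\bigr)<0
\]
when $\operatorname{Re}\kappa>|c|/4+1$ and $\rho\neq0$ (and the bound is $-2(\operatorname{Re}\kappa-1)<0$ when $\rho=0$), a contradiction. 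Your part~(b), by contrast, is correct as written: the recurrence $4\kappa u_p'=-cu_{p+1}$ converts $\operatorname{Re}\kappa>|c|/4$ into the part-(a) hypothesis for the shifted parameter, and Noshiro--Warschawski then gives univalence. It survives unchanged once part~(a) is proved by the admissibility route.
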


	\begin{proof}[Proof of Theorem~\ref{thm univalence of u}]
		Define the function $p:\mathbb{D}\to\mathbb{C}$ by \[p(z)=\frac{-4\kappa}{c}u'_p(z).   \]
		Then the function $p(z)$ is analytic in $\mathbb{D}$ and $p(0)=1$. As the function $u_p(z)$ satisfies the differential equation $4z^2u''_p(z)+4\kappa zu'_p(z)+czu_p(z)=0,$ the function $p(z)$ satisfies
		\[ 4z^2p''(z)+4(\kappa+1)zp'(z)+czp(z)=0.  \]
		Define the function $\psi:\mathbb{C}^3\times\mathbb{D}\to\mathbb{C}$ by $\psi(r,s,t;z)=4t+4(\kappa+1)s+czr$ and let $\Omega:=\{0\}$. Then, for all $z\in\mathbb{D}$ we have that $\psi(p(z),zp'(z),z^2p''(z);z)\in\Omega.$
		For $r,s,t$ as in Lemma \ref{adm lemma}, we have
		\begin{align*}
		\left|\frac{\psi(r,s,t;z)}{4}\right|&=\left|t+s+\kappa\frac{me^{3i\theta}}{2\sqrt{2\cos2\theta}}+\frac{c}{4}\sqrt{2\cos2\theta}e^{i\theta}z\right|\\
		&\geq \left|(t+s)e^{-3i\theta}+\kappa\frac{m}{2\sqrt{2\cos2\theta}}\right|-\frac{|c|}{4}|z|\sqrt{2\cos2\theta}\\
		&\geq \operatorname{Re}((t+s)e^{-3i\theta})+\frac{m\operatorname{Re}\kappa}{2\sqrt{2\cos2\theta}}-\frac{|c|}{4}\sqrt{2}.\\
		&\geq \frac{3m^2}{8\sqrt{2\cos2\theta}}+\frac{m\operatorname{Re}\kappa}{2\sqrt{2\cos2\theta}}-\frac{|c|}{4}\sqrt{2}.
		\intertext{By hypothesis, $\operatorname{Re}\kappa>0$ and since $m\geq1$ so we have}
		\left|\frac{\psi(r,s,t;z)}{4}\right|&\geq \frac{3}{8\sqrt{2}}+\frac{\operatorname{Re}\kappa}{2\sqrt{2}}-\frac{|c|}{4}\sqrt{2}.
		\end{align*}
		Therefore, we get $\psi(r,s,t;z)\neq0$ for $r=\sqrt{2\cos2\theta}e^{i\theta},\, s=me^{3i\theta}/(2\sqrt{2\cos2\theta}),\, \operatorname{Re}((t+s)e^{-3i\theta})\geq 3m^2/(8\sqrt{2\cos2\theta})$ for $m\geq n\geq1,\, -\pi/4<\theta<\pi/4$ and $z\in\mathbb{D}$ if $\operatorname{Re}\kappa>\max\{0,|c|-3/4\}$. Hence, by Lemma \ref{adm lemma}, the theorem follows.
	\end{proof}

	\begin{proof}[Proof of Theorem~\ref{thm convexity of u}]
		Define the function $p:\mathbb{D}\to\mathbb{C}$ by \[ p(z)=1+\frac{zu_p''(z)}{u_p'(z)}.  \]
		Since, for $\kappa,c\in\mathbb{C}$ satisfying \eqref{convex}, $\operatorname{Re}\kappa>|c|/4$ also holds, Lemma \ref{non zero} implies that $u_p$ is univalent in $\mathbb{D}$ and thus $u'_p(z)\neq0$ for all $z\in\mathbb{D}$. The function $p(z)$ therefore is analytic in $\mathbb{D}$ and $p(0)=1$. Since $u_p$ satisfies the equation $4z^2u''_p(z)+4\kappa zu'_p(z)+czu_p(z)=0,$ the function $p(z)$ satisfies the differential equation
		\[ 4(zp'(z)-(p(z)-1)+(p(z)-1)^2)+4(\kappa+1)(p(z)-1)+cz=0.  \]
		Let $\psi:\mathbb{C}^2\times\mathbb{D}\to\mathbb{C}$ be defined by
		\[ \psi(r,s;z)=4(s-(r-1)+(r-1)^2)+4(\kappa+1)(r-1)+cz  \] and let $\Omega:=\{0\}$. Then $\psi(p(z),zp'(z);z)\in\Omega$ for all $z\in\mathbb{D}.$
		For $r,s$ as in Lemma \ref{adm lemma}, we have
		\begin{align}
		\left|\frac{\psi(r,s;z)}{4}\right|&=\left|s+r^2-1+(\kappa-2)(r-1)+\frac{c}{4}z\right|.\nonumber\\
		&\geq |s+r^2-1|-|k-2||r-1|-\left|\frac{c}{4}z\right|.\label{expr}
		\end{align}
		We first note that, using Lemma \ref{adm lemma},
		\begin{align*}
		|s+r^2-1|^2&=\left|\frac{me^{3i\theta}}{2\sqrt{2\cos2\theta}}+e^{4i\theta}\right|^2=\left|\frac{m}{2\sqrt{2\cos2\theta}}+e^{i\theta}\right|^2\\
		&=\frac{m^2}{8\cos2\theta}+1+\frac{m\cos\theta}{\sqrt{2\cos2\theta}}\geq \frac{9}{8}+\frac{1}{\sqrt{2}}
		\end{align*}
		and $|r-1|^2=2\cos2\theta+1-2\sqrt{2\cos2\theta}\cos\theta \leq 3$. Using these in equation \eqref{expr}, we get \[\left|\frac{\psi(r,s;z)}{4}\right|\geq\sqrt{\frac{9}{8}+\frac{1}{\sqrt{2}}}-\sqrt{3}|k-2|-\frac{|c|}{4}.   \]
		Hence, we get $\psi(r,s;z)\neq0$ for $r=\sqrt{2\cos 2\theta}e^{i\theta},\ s=me^{3i\theta}/(2\sqrt{2\cos2\theta})$ for $m\geq n\geq1,\ -\pi/4<\theta<\pi/4$ and $z\in\mathbb{D}$ if $\sqrt{3}|\kappa-2|+|c|/4<\sqrt{\frac{9}{8}+\frac{1}{\sqrt{2}}}$ which holds by given hypothesis. By Lemma \ref{adm lemma}, the theorem follows.	
	\end{proof}
	
	\begin{proof}[Proof of Theorem~\ref{thm convexity of h}]
		Define the function $p:\mathbb{D}\to\mathbb{C}$ by \[ p(z)=1+\frac{zh''_{\mu,p}(z)}{h'_{\mu,p}(z)}. \]
		As in \cite[Theorem 2.1]{MR3353311}, \[ |h'_{\mu,p}(z)|>\frac{2MN-4M-3N}{N(2M-3)}\ \ (M>3/2) \] where $M=(\mu+5)^2-p^2$ and $N=(\mu+3)^2-p^2$. Since for $\mu,p$ satisfying \eqref{eq h convexity}, $M>3/2$ and \[ |h'_{\mu,p}(z)|>\frac{2MN-4M-3N}{N(2M-3)}>0,  \] the function $p(z)$ is analytic in $\mathbb{D}$ and $p(0)=1.$
		
		On differentiating the equation \eqref{de for h}, dividing by $h'_{\mu,p}(z)$ and multiplying by $z$, we get
		\begin{equation}\label{h 4th derivative}
		\begin{split}
		&z^3\frac{h^{(4)}_{\mu,p}(z)}{h'_{\mu,p}(z)}+(\mu+4)\frac{z^2h^{(3)}_{\mu,p}(z)}{h'_{\mu,p}(z)}\\
		&+\left(\mu+2+\frac{(\mu+1)^2-p^2}{4}+\frac{z}{4}\right)\frac{zh''_{\mu,p}(z)}{h'_{\mu,p}(z)}+\frac{z}{2}=0.
		\end{split}
		\end{equation}
		Using \eqref{h 4th derivative}, we see that the function $p(z)$ satisfies
		\begin{align*}
		&\quad{}z^2p''(z)-2(zp'(z)-(p(z)-1))-2(p(z)-1)^3+3(p(z)-1)\\
		&\quad{}(zp'(z)-(p(z)-1)+(p(z)-1)^2)+(\mu+4)(zp'(z)-(p(z)-1)+(p(z)-1)^2)\\
		&+\left(\mu+2+\frac{(\mu+1)^2-p^2}{4}+\frac{z}{4}\right)(p(z)-1)+\frac{z}{2}=0.
		\end{align*}
		Define $\psi:\mathbb{C}^3\times\mathbb{D}\to\mathbb{C}$ by
		\begin{align*}
		\psi(r,s,t;z)&=t-2(s-(r-1))-2(r-1)^3+3(r-1)\\
		&\quad{}\quad{}(s-(r-1)+(r-1)^2)+(\mu+4)(s-(r-1)+(r-1)^2)\\
		&\quad{}+\left(\mu+2+\frac{(\mu+1)^2-p^2}{4}+\frac{z}{4}\right)(r-1)+\frac{z}{2}\\
		&=t+s+3rs+(\mu-2)s+(\mu+1)(r^2-1)+(r-1)^3\\
		&\quad{}+(r-1)\left(\frac{(\mu+1)^2-p^2}{4}-2\mu-2+\frac{z}{4}\right)+\frac{z}{2}.
		\end{align*}
		Let $\Omega:=\{0\}$. Then we have that $\psi(p(z),zp'(z),z^2p''(z);z)\in\Omega$ for all $z\in\mathbb{D}$. 
		For $r,s,t$ as in Lemma \ref{adm lemma}, we have
		\begin{align*}
		|\psi(r,s,t;z)|&=\Bigg|t+s+3\frac{m}{2}e^{4i\theta}+(\mu-2)\frac{me^{3i\theta}}{2\sqrt{2\cos2\theta}}+(\mu+1)e^{4i\theta}\\
		&\quad{}+(\sqrt{2\cos2\theta}e^{i\theta}-1)^3+\left(\frac{(\mu+1)^2-p^2}{4}-2\mu-2+\frac{z}{4}\right)\\
		&\quad{}\quad{}(\sqrt{2\cos2\theta}e^{i\theta}-1)+\frac{z}{2}\Bigg|\\
		&\geq \left|(t+s)e^{-3i\theta}+3\frac{m}{2}e^{i\theta}+(\mu-2)\frac{m}{2\sqrt{2\cos2\theta}}+(\mu+1)e^{i\theta}\right|\\
		&\quad{}-|\sqrt{2\cos2\theta}e^{i\theta}-1|^3-\left|\frac{(\mu+1)^2-p^2}{4}-2\mu-2\right||\sqrt{2\cos2\theta}e^{i\theta}-1|\\
		&\quad{}-\frac{|\sqrt{2\cos2\theta}e^{i\theta}-1|}{4}|z|-\frac{|z|}{2}\\
		&\geq \operatorname{Re}(t+s)e^{-3i\theta}+\frac{3m}{2}\cos\theta+(\mu-2)\frac{m}{2\sqrt{2\cos2\theta}}+(\mu+1)\cos\theta\\
		&\quad{}-\frac{13\sqrt{3}}{4}-\left|\frac{(\mu+1)^2-p^2}{4}-2\mu-2\right|\sqrt{3}-\frac{1}{2}\\
		&\geq \frac{3m^2}{8\sqrt{2\cos2\theta}}+\frac{3m}{2\sqrt{2}}+(\mu-2)\frac{m}{2\sqrt{2\cos2\theta}}+(\mu+1)\frac{1}{\sqrt{2}}-\frac{13\sqrt{3}}{4}\\
		&\quad{}-\left|\frac{(\mu+1)^2-p^2}{4}-2\mu-2\right|\sqrt{3}-\frac{1}{2}.
		\intertext{For $\mu,p$ as in \eqref{eq h convexity}, $\mu>2,$ we have}
		|\psi(r,s,t;z)|&\geq \frac{15}{8\sqrt{2}}+\frac{3\mu}{2\sqrt{2}}-\left|\frac{(\mu+1)^2-p^2}{4}-2\mu-2\right|\sqrt{3}-\frac{13\sqrt{3}}{4}-\frac{1}{2}.
		\end{align*}
		Hence, we get $\psi(r,s,t;z)\neq0$ for $r=\sqrt{2\cos 2\theta}e^{i\theta},\ s=me^{3i\theta}/(2\sqrt{2\cos2\theta}),\text{ and } \operatorname{Re}((t+s)e^{-3i\theta})\geq 3m^2/(8\sqrt{2\cos2\theta})$ for $m\geq n\geq1,\ -\pi/4<\theta<\pi/4$ and $z\in\mathbb{D}$ if \[\frac{3\mu}{2\sqrt{2}}-\sqrt{3}\left|\frac{(\mu+1)-p^2}{4}-2\mu-2\right|>\frac{13\sqrt{3}}{4}-\frac{15}{8\sqrt{2}}+\frac{1}{2},\] which holds by given hypothesis. By Lemma \ref{adm lemma}, the theorem follows.
	\end{proof}

	\begin{proof}[Proof of Theorem~\ref{thm convexity of f}]
		Define the function $p:\mathbb{D}\to\mathbb{C}$ by \[ p(z)=1+\frac{zf''_{\mu,p}(z)}{f'_{\mu,p}(z)} . \]
		As $f'_{\mu,p}(z)=h_{\mu,p}(z)/z,$ using \cite[Corollary 2.4]{MR3353311}, $p(z)$ is analytic in $\mathbb{D}$ for $\mu,p$ satisfying \eqref{eq f convexity} and $p(0)=1$. Then the function $p(z)$ satisfies the differential equation
		\begin{align*}
		&z^2p''(z)-2(zp'(z)-(p(z)-1)+(p(z)-1)^2)+3(zp'(z)-(p(z)-1)+(p(z)-1)^2)\\
		&(p(z)-1)+2(p(z)-1)^2-2(p(z)-1)^3+(\mu+4)(zp'(z)-(p(z)-1)+(p(z)-1)^2)\\
		&+\left(\frac{(\mu+1)^2-p^2}{4}+\frac{z}{4}+\mu\right)(p(z)-1)+\frac{z}{4}=0.
		\end{align*}
		Define $\psi:\mathbb{C}^3\times\mathbb{D}\to\mathbb{C}$ by
		\begin{align*}
		\psi(r,s,t;z)&=t-2(s-(r-1)+(r-1)^2)+3(s-(r-1)+(r-1)^2)(r-1)\\
		&\quad{}+2(r-1)^2-2(r-1)^3+(\mu+4)(s-(r-1)+(r-1)^2)\\
		&\quad{}+\left(\frac{(\mu+1)^2-p^2}{4}+\frac{z}{4}+\mu\right)(r-1)+\frac{z}{4}\\
		&=t+s+3sr+(\mu-2)s+(\mu+1)(r^2-1)+(r-1)^3\\
		&\quad{}+(r-1)\left(\frac{(\mu+1)^2-p^2}{4}+\frac{z}{4}-2\mu-2\right)+\frac{z}{4}.
		\end{align*}
		Let $\Omega:=\{0\}$. Clearly $\psi(p(z),zp'(z),z^2p''(z);z)\in\Omega$ for all $z\in\mathbb{D}$. 
		For $r,s,t$ as in Lemma \ref{adm lemma}, we have
		\begin{align*}
		|\psi(r,s,t;z)|&=\Bigg|t+s+\frac{3m}{2}e^{4i\theta}+(\mu-2)\frac{me^{3i\theta}}{2\sqrt{2\cos2\theta}}+(\mu+1)e^{4i\theta}+(\sqrt{2\cos2\theta}e^{i\theta}-1)^3\\
		&\quad{}+\left(\frac{(\mu+1)^2-p^2}{4}+\frac{z}{4}-2\mu-2\right)(\sqrt{2\cos2\theta}e^{i\theta}-1)-\frac{z}{4}\Bigg|\\
		&\geq\left|(t+s)e^{-3i\theta}+\frac{3m}{2}e^{i\theta}+(\mu-2)\frac{m}{2\sqrt{2\cos2\theta}}+(\mu+1)e^{i\theta}\right|-|\sqrt{2\cos2\theta}e^{i\theta}-1|^3\\
		&\quad{}-\left|\frac{(\mu+1)^2-p^2}{4}-2\mu-2\right||\sqrt{2\cos2\theta}e^{i\theta}-1|-\frac{|z|}{4}|\sqrt{2\cos2\theta}e^{i\theta}-1|-\frac{|z|}{4}\\
		&\geq \operatorname{Re}((t+s)e^{-3i\theta})+\frac{3m}{2}\cos\theta+(\mu-2)\frac{m}{2\sqrt{2\cos2\theta}}+(\mu+1)\cos\theta-\frac{13\sqrt{3}}{4}\\
		&\quad{}-\left|\frac{(\mu+1)^2-p^2}{4}-2\mu-2\right|\sqrt{3}-\frac{1}{4}.
		\intertext{Since for $\mu,p$ satisfying \eqref{eq f convexity}, $\mu>2$ and hence we have}
		|\psi(r,s,t;z)|&\geq \frac{15}{8\sqrt{2}}+\frac{3\mu}{2\sqrt{2}}-\left|\frac{(\mu+1)^2-p^2}{4}-2\mu-2\right|\sqrt{3}-\frac{13\sqrt{3}}{4}-\frac{1}{4}.
		\end{align*}
		Hence, we get $\psi(r,s,t;z)\neq 0$ for $r=\sqrt{2\cos2\theta}e^{i\theta},\, s=me^{3i\theta}/(2\sqrt{2\cos2\theta}),\text{ and } \operatorname{Re}((t+s)e^{-3i\theta})\geq3m^2/(8\sqrt{2\cos2\theta})$ for $m\geq n\geq 1,\, -\pi/4<\theta<\pi/4$ and $z\in\mathbb{D}$ if \[ \frac{3\mu}{2\sqrt{2}}-\sqrt{3}\left|\frac{(\mu+1)^2-p^2}{4}-2\mu-2\right|>\frac{13\sqrt{3}}{4}-\frac{15}{8\sqrt{2}}+\frac{1}{4}, \] which holds by given hypothesis. By Lemma \ref{adm lemma}, the theorem follows.
	\end{proof}

	\begin{proof}[Proof of Theorem~\ref{thm univalence of f}]
		Define the function $p:\mathbb{D}\to\mathbb{C}$ by \[ p(z)=f'_{\mu,p}(z)=\frac{h_{\mu,p}(z)}{z}.  \]
		Then the function $p(z)$ is analytic in $\mathbb{D}$ and $p(0)=1$. As the function $h_{\mu,p}(z)$ satisfies equation \eqref{de for h}, so $p(z)$ satisfies \[ z^2p''(z)+(\mu+2)zp'(z)+\left(\frac{z}{4}+\frac{(\mu+1)^2-p^2}{4}\right)p(z)-\left(\frac{(\mu+1)^2-p^2}{4}\right)=0.  \]
		Define $\psi:\mathbb{C}^3\times\mathbb{D}\to\mathbb{C}$ by \[\psi(r,s,t;z)=t+(\mu+2)s+\left(\frac{z}{4}+\frac{(\mu+1)^2-p^2}{4}\right)r-\left(\frac{(\mu+1)^2-p^2}{4}\right)=0\]
		and let $\Omega:=\{0\}.$ Then for all $z\in\mathbb{D}$ we have that $\psi(p(z),zp'(z),z^2p''(z);z)\in\Omega$. For $r,s,t$ given in Lemma \ref{adm lemma}, we have
		\begin{align*}
		\psi(r,s,t;z)&=t+s+(\mu+1)\frac{me^{3i\theta}}{2\sqrt{2\cos2\theta}}-\left(\frac{(\mu+1)^2-p^2}{4}\right)\\
		&\quad{}+\left(\frac{z}{4}+\frac{(\mu+1)^2-p^2}{4}\right)\sqrt{2\cos2\theta}e^{i\theta}\\
		&\geq \left|(t+s)e^{-3i\theta}+\frac{(\mu+1)m}{2\sqrt{2\cos2\theta}}\right|-\frac{|z|}{4}\sqrt{2\cos2\theta}\\
		&\quad{}-\left|\frac{(\mu+1)^2-p^2}{4}\right||\sqrt{2\cos2\theta}e^{i\theta}-1|\\
		&\geq \operatorname{Re}((t+s)e^{-3i\theta})+\frac{(\operatorname{Re}\mu+1)m}{2\sqrt{2\cos2\theta}}-\frac{\sqrt{2}}{4}-\left|\frac{(\mu+1)^2-p^2}{4}\right|\sqrt{3}\\
		&\geq \frac{3m^2}{8\sqrt{2\cos2\theta}}+\frac{(\operatorname{Re}\mu+1)m}{2\sqrt{2\cos2\theta}}-\frac{1}{2\sqrt{2}}-\left|\frac{(\mu+1)^2-p^2}{4}\right|\sqrt{3}.
		\intertext{According to given hypothesis, $\operatorname{Re}\mu>-1$ and $m\geq1,$ we have}
		|\psi(r,s,t;z)|&\geq \frac{(\operatorname{Re}\mu+1)}{2\sqrt{2}}-\frac{1}{8\sqrt{2}}-\left|\frac{(\mu+1)^2-p^2}{4}\right|\sqrt{3}.
		\end{align*}
		It is clear from the hypothesis that for $r=\sqrt{2\cos2\theta}e^{i\theta},s=me^{3i\theta}/(2\sqrt{2\cos2\theta})$ and $t$ such that $\operatorname{Re}((t+s)e^{-3i\theta})\geq3m^2/(8\sqrt{2\cos2\theta})$ for $m\geq n\geq1,-\pi/4<\theta<\pi/4$ and $z\in\mathbb{D},$ we have $\psi(r,s,t;z)\neq0$. Hence, by Lemma \ref{adm lemma}, the theorem follows.
	\end{proof}
	

\end{document}